\newtheorem{thm}{Theorem}
\newtheorem{lem}[thm]{Lemma}
\newtheorem*{thm*}{Theorem}
\newtheorem*{lem*}{Lemma}
\theoremstyle{definition}
\newtheorem*{skcu}{Strong Kirchberg Conjecture (II)}
\newtheorem*{skcp}{Strong Kirchberg Conjecture (I)}
\newcommand{\IB}{{\mathbb B}}
\newcommand{\IC}{{\mathbb C}}
\newcommand{\IF}{{\mathbb F}}
\newcommand{\IM}{{\mathbb M}}
\newcommand{\IN}{{\mathbb N}}
\newcommand{\IR}{{\mathbb R}}
\newcommand{\IZ}{{\mathbb Z}}
\newcommand{\cH}{{\mathcal H}}
\newcommand{\cP}{{\mathcal P}}
\newcommand{\cQ}{{\mathcal Q}}
\newcommand{\cU}{{\mathcal U}}
\newcommand{\fC}{{\mathfrak C}}
\newcommand{\fF}{{\mathfrak F}_m^d}
\newcommand{\fK}{{\mathfrak K}}
\newcommand{\fM}{{\mathfrak M}}
\newcommand{\fQ}{{\mathfrak Q}}
\newcommand{\e}{\varepsilon}
\newcommand{\p}{\varphi}
\DeclareMathOperator*{\Lim}{Lim}
\DeclareMathOperator{\lh}{span}
\newcommand{\ip}[1]{\mathopen{\langle}#1\mathclose{\rangle}}
\newcommand{\mb}[1]{\left[ #1 \right]}
\title{Tsirelson's Problem and Asymptotically Commuting Unitary Matrices}
\author{Narutaka Ozawa}
\address{RIMS, Kyoto University, \mbox{606-8502} Kyoto, Japan}
\email{narutaka@kurims.kyoto-u.ac.jp}
\thanks{Partially supported by JSPS (23540233) and by
the Danish National Research Foundation (DNRF) through the
Centre for Symmetry and Deformation}
\subjclass{81P15; 46L06, 46L07}
\keywords{Tsirelson's Problem, Kirchberg's Conjecture, Connes Embedding Conjecture}
\date{16 February 2012}
\begin{document}
\begin{abstract}
In this note, we consider quantum correlations of bipartite systems
having a slight interaction, and reinterpret Tsirelson's problem
(and hence Kirchberg's and Connes's conjectures) in terms of
finite-dimensional asymptotically commuting positive operator valued measures.
We also consider the systems of asymptotically commuting unitary matrices and
formulate the Stronger Kirchberg Conjecture.
\end{abstract}
\maketitle
\section{Introduction}
A POVM (positive operator valued measure) with $m$ outputs is
an $m$-tuple $(A_i)_{i=1}^m$ of positive semi-definite operators
on a Hilbert space $\cH$ such that $\sum A_i=1$.
We write the convex sets of quantum correlation matrices
of two independent systems of $d$ POVMs with $m$ outputs by
\[
\cQ_c=\left\{ \mb{\ip{ \xi, A_i^k B_j^l \xi}}_{\begin{subarray}{c}k,l\\ i,j\end{subarray}} :
\begin{array}{c}
\mbox{$\cH$ a Hilbert space, $\xi\in\cH$ a unit vector}\\
\mbox{$(A_i^k)_{i=1}^m$, $k=1,\ldots,d$, POVMs on $\cH$,}\\
\mbox{$(B_j^l)_{j=1}^m$, $l=1,\ldots,d$, POVMs on $\cH$,}\\
\mbox{$[A_i^k,B_j^l]=0$ for all $i,j$ and $k,l$}
\end{array}
\right\}
\]
and
\[
\cQ_s=\mathrm{closure}\left\{ \mb{\ip{ \xi, A_i^k B_j^l \xi}}_{\begin{subarray}{c}k,l\\ i,j\end{subarray}} :
\begin{array}{c}
\mbox{$\dim\cH<+\infty$, $\xi\in\cH$ a unit vector}\\
\mbox{$(A_i^k)_{i=1}^m$, $k=1,\ldots,d$, POVMs on $\cH$,}\\
\mbox{$(B_j^l)_{j=1}^m$, $l=1,\ldots,d$, POVMs on $\cH$,}\\
\mbox{$[A_i^k,B_j^l]=0$ for all $i,j$ and $k,l$}
\end{array}
\right\}.
\]
Here $i,j,k,l$ are indices and $A_i^k$ does not mean the $k$-th power of $A_i$.
The sets $\cQ_c$ and $\cQ_s$ are closed convex subsets of $\IM_{md}(\IR_{\geq0})$
such that $\cQ_s\subset\cQ_c$.
Whether they coincide (for some/all $m,d\geq2$, $(m,d)\neq(2,2)$) is the well-known
Tsirelson problem, and the matricial version of it is known to be
equivalent to Kirchberg's and Connes's conjectures.
We refer the reader to \cite{fritz,jungeetal,spc,tsirelson} for
the literature and the proof of the equivalence.
The matricial version of Tsirelson's problem asks whether $\cQ_c^n=\cQ_s^n$ for all $n$,
where $\cQ_c^n$ and $\cQ_s^n$ are defined as follows:
\[
\cQ_c^n=\left\{ \mb{V^*A_i^k B_j^lV}_{\begin{subarray}{c}k,l\\ i,j\end{subarray}} :
\begin{array}{c}
\mbox{$\cH$ a Hilbert space, $V\colon\ell_2^n\to\cH$ an isometry}\\
\mbox{$(A_i^k)_{i=1}^m$, $k=1,\ldots,d$, POVMs on $\cH$,}\\
\mbox{$(B_j^l)_{j=1}^m$, $l=1,\ldots,d$, POVMs on $\cH$,}\\
\mbox{$[A_i^k,B_j^l]=0$ for all $i,j$ and $k,l$}
\end{array}
\right\}
\]
and
\[
\cQ_s^n=\mathrm{closure}\left\{ \mb{ V^*A_i^k B_j^lV}_{\begin{subarray}{c}k,l\\ i,j\end{subarray}} :
\begin{array}{c}
\mbox{$\dim\cH<+\infty$, $V\colon\ell_2^n\to\cH$ an isometry}\\
\mbox{$(A_i^k)_{i=1}^m$, $k=1,\ldots,d$, POVMs on $\cH$,}\\
\mbox{$(B_j^l)_{j=1}^m$, $l=1,\ldots,d$, POVMs on $\cH$,}\\
\mbox{$[A_i^k,B_j^l]=0$ for all $i,j$ and $k,l$}
\end{array}
\right\}.
\]

In this note, we consider ``slightly interacting" systems.
Suppose Alice and Bob conduct measurements by systems of operators $(A_i^{1/2})_{i=1}^m$
and $(B_j^{1/2})_{j=1}^m$ respectively. If Bob conducts a measurement immediately after
Alice's measurement of a state $\xi$, then the probability of the output $(i,j)$ is
$\|B_j^{1/2}A_i^{1/2}\xi\|^2$---and vice versa. Therefore, when they conduct measurements
of a state $\xi$ at the same time, the probability of the output $(i,j)$ is given by
$\ip{\xi,(A_i\bullet B_j)\xi}$, where $A\bullet B=(A^{1/2}BA^{1/2}+B^{1/2}AB^{1/2})/2$.
Thus, for $\e>0$, we define the quantum correlation matrices of slightly interacting systems to be
\[
\cQ_\e^n=\mathrm{closure}\left\{ \mb{V^* (A_i^k \bullet B_j^l) V}_{\begin{subarray}{c}k,l\\ i,j\end{subarray}} :
\begin{array}{c}
\mbox{$\dim\cH<+\infty$, $V\colon\ell_2^n\to\cH$ an isometry}\\
\mbox{$(A_i^k)_{i=1}^m$, $k=1,\ldots,d$, POVMs on $\cH$,}\\
\mbox{$(B_j^l)_{j=1}^m$, $l=1,\ldots,d$, POVMs on $\cH$,}\\
\mbox{$\|[A_i^k,B_j^l]\|\le\e$ for all $i,j$ and $k,l$}
\end{array}
\right\},
\]
where $\|[A,B]\|$ denotes the operator norm of the commutator $[A,B]=AB-BA$.
We note that $\cQ_\e^n$ is a closed convex subset of $\IM_{md}(\IM_n(\IC)_+)$.
Recall that a POVM $(A_i)_{i=1}^m$ is said to be \emph{projective} if
all $A_i$'s are orthogonal projections.
We also introduce the projective analogue of $\cQ_\e^n$:
\[
\cP_\e^n=\mathrm{closure}\left\{ \mb{V^* (P_i^k \bullet Q_j^l) V}_{\begin{subarray}{c}k,l\\ i,j\end{subarray}} :
\begin{array}{c}
\mbox{$\dim\cH<+\infty$, $V\colon\ell_2^n\to\cH$ an isometry}\\
\mbox{$(P_i^k)_{i=1}^m$ projective POVMs on $\cH$,}\\
\mbox{$(Q_j^l)_{j=1}^m$ projective POVMs on $\cH$,}\\
\mbox{$\|[P_i^k,Q_j^l]\|\le\e$ for all $i,j$ and $k,l$}
\end{array}
\right\}.
\]
We simply write $\cP_\e$ for $\cP_\e^1$.
The following is the main result of this note.
It probably suggests that $\cQ_c$ is more natural than $\cQ_s$ (cf.\ Introduction of \cite{fritz}).
\begin{thm*}
For every $m,d$, and $n$, one has $\cQ_c^n=\bigcap_{\e>0}\cQ_\e^n=\bigcap_{\e>0}\cP_\e^n$.
In particular, an affirmative answer to Tsirelson's problem is
equivalent to that $\bigcap_{\e>0}\cP_\e\subset\cQ_s$.
\end{thm*}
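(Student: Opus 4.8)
The plan is to prove the circle of inclusions
$\bigcap_{\e>0}\cP_\e^n\subseteq\bigcap_{\e>0}\cQ_\e^n\subseteq\cQ_c^n\subseteq\bigcap_{\e>0}\cP_\e^n$.
The first inclusion is trivial, since a projective POVM is a POVM and hence $\cP_\e^n\subseteq\cQ_\e^n$ for every $\e$; and the last assertion of the theorem follows immediately, because $\cQ_s\subseteq\cQ_c$ always holds, so once the identity $\cQ_c=\bigcap_{\e>0}\cP_\e$ is established, Tsirelson's equality $\cQ_c=\cQ_s$ is equivalent to the single inclusion $\bigcap_{\e>0}\cP_\e\subseteq\cQ_s$. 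Thus the content lies in the two remaining inclusions.

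For $\bigcap_{\e>0}\cQ_\e^n\subseteq\cQ_c^n$ I would use a $\mathrm C^*$-algebraic ultraproduct followed by a Stinespring dilation. Given $x$ in the intersection, for each $k$ pick, using that $\cQ_{1/k}^n$ is a closure, a finite-dimensional $\cH_k$, an isometry $V_k\colon\ell_2^n\to\cH_k$, and POVMs $(A^{l}_{k,i})_i$ and $(B^{l}_{k,j})_j$ on $\cH_k$ with $\|[A^{l}_{k,i},B^{l'}_{k,j}]\|\le 1/k$ and $\|V_k^*(A^{l}_{k,i}\bullet B^{l'}_{k,j})V_k-x\|\le 1/k$ in each entry. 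Fix a free ultrafilter $\cU$ on $\IN$ and put $\mathcal M:=\prod_{k}{}^{\cU}\,\IB(\cH_k)$. The classes $\tilde A^{l}_i,\tilde B^{l'}_j$ of those sequences are POVMs in $\mathcal M$, and $[\tilde A^{l}_i,\tilde B^{l'}_j]=0$ since the commutator norms tend to $0$; as $t\mapsto\sqrt t$ is continuous, $\tilde A^{l}_i{}^{1/2}$ commutes with $\tilde B^{l'}_j$, so $\tilde A^{l}_i\bullet\tilde B^{l'}_j=\tilde A^{l}_i\tilde B^{l'}_j$, and by continuity this product is the class of $(A^{l}_{k,i}\bullet B^{l'}_{k,j})_k$. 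The compressions $T\mapsto V_k^*TV_k$ induce a unital completely positive map $\Phi\colon\mathcal M\to\IM_n(\IC)$, $\Phi\big([(T_k)_k]\big)=\lim_{k\to\cU}V_k^*T_kV_k$ (the limit exists because the target is finite-dimensional), with $\Phi(\tilde A^{l}_i\tilde B^{l'}_j)=x$ in each entry. Applying Stinespring to $\Phi$ yields a Hilbert space $\cH$, a representation $\pi\colon\mathcal M\to\IB(\cH)$ and an isometry $V\colon\ell_2^n\to\cH$ with $\Phi=V^*\pi(\,\cdot\,)V$; then $(\pi(\tilde A^{l}_i))_i$ and $(\pi(\tilde B^{l'}_j))_j$ are commuting POVMs on $\cH$ realizing $x$, so $x\in\cQ_c^n$.

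For $\cQ_c^n\subseteq\bigcap_{\e>0}\cP_\e^n$ — the core of the theorem — I would proceed as follows. Let $x\in\cQ_c^n$ be realized by commuting POVMs on $\cH$ with isometry $V$, and fix $\e>0$. Restricting to the separable invariant subspace spanned by words in the POVM elements applied to $V\ell_2^n$, assume $\cH$ is separable; and by a Naimark-type dilation, performed so as to keep the bipartite commuting structure and the entries $V^*A_i^kB_j^lV$ intact (cf.\ \cite{fritz,jungeetal}), assume the POVMs are projective. Then $x$ is encoded by a pair of commuting unital representations $\tau_1,\tau_2$ of the universal $\mathrm C^*$-algebra $\mathcal B$ generated by $d$ projective POVMs with $m$ outputs — so $\mathcal B\cong\mathrm C^*(\IZ_m*\dots*\IZ_m)$ ($d$ free factors), a full free product of finite-dimensional $\mathrm C^*$-algebras — together with the unital completely positive map $\Psi:=V^*(\tau_1\cdot\tau_2)(\,\cdot\,)V\colon\mathcal B\otimes_{\max}\mathcal B\to\IM_n(\IC)$. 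I would then invoke that $\mathcal B$ is residually finite-dimensional and has the local lifting property (Choi; Kirchberg), and use the latter to lift the two commuting representations of $\mathcal B$ \emph{separately} through the asymptotic matrix algebra $\prod_N\IM_N(\IC)\big/\bigoplus_N\IM_N(\IC)$: for each finite $F\subseteq\mathcal B$ and $\delta>0$ this should produce some $N$, unital completely positive maps $\sigma_1,\sigma_2\colon\mathcal B\to\IM_N(\IC)$, and an isometry $W\colon\ell_2^n\to\ell_2^N$ with $\|[\sigma_1(a),\sigma_2(b)]\|<\delta$ and $\|W^*\sigma_1(a)\sigma_2(b)W-\Psi(a\otimes b)\|<\delta$ for $a,b\in F$ — the key being that the lifts of $\tau_1,\tau_2$ commute asymptotically \emph{in operator norm} because $\tau_1,\tau_2$ commute exactly. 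Finally, as each $\IZ_m$-factor of $\mathcal B$ is finite-dimensional, an almost multiplicative unital completely positive map on it is norm-close to a $*$-homomorphism; perturbing $\sigma_1,\sigma_2$, one may take them to be representations, i.e.\ $(\sigma_1(p_i^k))_i$ and $(\sigma_2(p_j^l))_j$ to be ($2\delta$-commuting) projective POVMs on $\ell_2^N$. Taking $F=\{p_i^k,p_j^l\}$ and letting $\delta\to0$ then gives $x\in\cP_\e^n$, and as $\e>0$ was arbitrary, $x\in\bigcap_{\e>0}\cP_\e^n$.

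The main obstacle is the finite-dimensional approximation in the last paragraph. One must produce matrices that commute asymptotically \emph{in operator norm} — mere weak or tracial approximate commutation of the compressions would be easy but useless — and one must do so using only the local lifting property of the single algebra $\mathcal B$. A lifting property of $\mathcal B\otimes_{\max}\mathcal B$ is not available: it would be essentially equivalent to the conclusion of the Kirchberg and Connes conjectures, which is exactly what must not be assumed. Organizing the two liftings of $\tau_1$ and $\tau_2$ so that norm-approximate commutation persists in the limit, while the compression $V^*(\,\cdot\,)V$ retains all the needed information, is the delicate point; the rest — the ultraproduct argument, the separability reduction, the Naimark dilation, and the final perturbation to $*$-homomorphisms — is routine.
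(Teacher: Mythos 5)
Your overall structure and the easy two legs are sound: the inclusion $\bigcap_{\e>0}\cP_\e^n\subseteq\bigcap_{\e>0}\cQ_\e^n$ is trivial, and your ultraproduct-plus-Stinespring argument for $\bigcap_{\e>0}\cQ_\e^n\subseteq\cQ_c^n$ is essentially the paper's. The problem is the hard leg $\cQ_c^n\subseteq\bigcap_{\e>0}\cP_\e^n$, where your plan has a genuine gap that I do not think can be repaired along the lines you sketch.

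You propose to lift the two commuting representations $\tau_1,\tau_2$ of $\mathcal B=\fF$ \emph{separately}, using the LLP of the single free product $\mathcal B$, through $\prod_N\IM_N(\IC)/\bigoplus_N\IM_N(\IC)$, hoping that norm-asymptotic commutation of the lifts is forced by the exact commutation of $\tau_1$ and $\tau_2$. But the LLP is a lifting property: to invoke it you must already possess a map from your algebra into a quotient, and here there is no natural map from $\IB(\cH)$, or from $\mathrm C^*(\tau_1(\mathcal B)\cup\tau_2(\mathcal B))$, into that matrix corona. Residual finite-dimensionality of $\mathcal B$ gives you two unrelated embeddings of $\mathcal B$ into matrices, but nothing links them, and lifting each one separately gives matrices whose commutators are wild. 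Making the two lifts asymptotically commute in operator norm is exactly the content one would need, and it is not a consequence of the LLP of $\mathcal B$ alone; if it were, one could run your argument with $\mathcal B=\mathrm C^*\IF_d$ and conclude $\mathrm C^*\IF_d\otimes_{\max}\mathrm C^*\IF_d$ is RFD, i.e.\ Kirchberg's conjecture, without further input. You partially sense this (you note that the LLP of $\mathcal B\otimes_{\max}\mathcal B$ is off-limits), but the separate-lifting workaround does not escape the trap.

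What actually closes the gap in the paper is the \emph{quasi-diagonality} of $\fF\otimes_{\max}\fF$ (Theorem~\ref{thm:qd}, a free-product homotopy argument in the spirit of Brown and Voiculescu). Given a faithful essential representation of $\fF\otimes_{\max}\fF$ on a separable $\cH$ containing the range of the Stinespring isometry $V$, quasi-diagonality hands you a single increasing sequence of finite-rank projections $P_r\nearrow 1$ with $\|[C,P_r]\|\to0$ for every $C$ in the algebra. Compressing \emph{both} families $e_i^k$ and $f_j^l$ by the \emph{same} $P_r$ is what produces finite matrices that asymptotically commute in norm, and a small perturbation turns the compressions into projective POVMs, while $\|P_rV-V\|\to0$ preserves the correlation matrix. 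That single simultaneous compression is the idea your proposal lacks; without it, the two lifts never see each other.
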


Hence, the matricial version of Tsirelson's problem would have an affirmative answer
if the following assertion holds for some/all $(m,d)$.
\begin{skcp}
Let $m,d\geq2$ be such that $(m,d)\neq(2,2)$.
For every $\kappa>0$, there is $\e>0$ with the following property.
If $\dim\cH<+\infty$, and $(P_i^k)_{i=1}^m$ and $(Q_j^l)_{j=1}^m$
is a pair of $d$ projective POVMs on $\cH$ such that
$\|[P_i^k,Q_j^l]\|\le\e$, then there are a finite-dimensional
Hilbert space $\tilde{\cH}$
containing $\cH$ and projective POVMs $(\tilde{P}_i^k)_{i=1}^m$
and $(\tilde{Q}_j^l)_{j=1}^m$ on $\tilde{\cH}$ such that
$\|[\tilde{P}_i^k,\tilde{Q}_j^l]\|=0$ and
$\|\Phi_{\cH}(\tilde{P}_i^k) - P_i^k\|\le\kappa$ and
$\|\Phi_{\cH}(\tilde{Q}_j^l) - Q_j^l\|\le\kappa$, where
$\Phi_{\cH}$ is the compression to $\cH$.
\end{skcp}

We will deal in Section \ref{sec:grpcstar} with
a parallel and equivalent conjecture in the unitary setting.
\section{Preliminary from $\mathrm{C}^*$-algebra theory}
As it is observed in \cite{fritz,jungeetal,tsirelson},
the study of quantum correlation matrices is essentially about
the algebraic tensor product $\fF\otimes\fF$ of the $\mathrm{C}^*$-alge\-bra
\[
\fF=\ell_\infty^m*\cdots*\ell_\infty^m,
\]
the unital full free product of $d$-copies of $\ell_\infty^m$.
We note that $\fF$ is $*$-isomorphic to the full group $\mathrm{C}^*$-alge\-bra
$\mathrm{C^*}\Gamma_{m,d}$ of the group $\Gamma_{m,d}=(\IZ/m\IZ)^{*d}$.
The condition $m,d\geq2$ and $(m,d)\neq(2,2)$ is equivalent to that
$\Gamma_{m,d}$ contains the free groups $\IF_r$.
We denote by $(e_i)_{i=1}^m$ the standard basis of minimal projections in $\ell_\infty^m$,
and by $(e_i^k)_{i=1}^m$ the $k$-th copy of
it in the free product $\fF$.
We also write $e_i^k$ for the elements $e_i^k\otimes 1$ in $\fF\otimes\fF$
and $f_j^l$ for $1\otimes e_j^l$.
Thus, the maximal tensor product $\fF\otimes_{\max}\fF$
is the universal $\mathrm{C}^*$-alge\-bra generated by
projective POVMs $(e_i^k)_{i=1}^m$ and $(f_j^l)_{j=1}^m$ under
the commutation relations $[e_i^k,f_j^l]=0$.
In passing, we note that $\mathrm{C^*}\Gamma\otimes_{\max}\mathrm{C^*}\Gamma$
is canonically $*$-isomorphic to $\mathrm{C^*}(\Gamma\times\Gamma)$ for any group $\Gamma$.
By Stinespring's dilation theorem (Theorem 1.5.3 in \cite{bo}), one has
\[
\cQ_c^n=\{\mb{\p(e_i^k f_j^l)}_{\begin{subarray}{c}k,l\\ i,j\end{subarray}} :
\p\colon\fF\otimes_{\max}\fF\to\IM_n(\IC) \mbox{ u.c.p.}\}
\subset\IM_{md}(\IM_n(\IC)_+).
\]
See \cite{fritz,jungeetal} for the proof.
Here u.c.p.\ stands for ``unital completely positive."

We recall the notion of quasi-diagonality.
We say a subset $\fC$ of $\IB(\cH)$ is \emph{quasi-diagonal} if
there is an increasing net $(P_r)$ of finite-rank orthogonal projections
on $\cH$ such that $P_r\nearrow 1$ in the strong operator topology
and $\|[C,P_r]\|\to0$ for every $C\in\fC$.
A $\mathrm{C}^*$-alge\-bra $\fC$ is said to be \emph{quasi-diagonal}
if there is a faithful $*$-repre\-sen\-ta\-tion $\pi$ of $\fC$ on
a Hilbert space $\cH$ such that $\pi(\fC)$ is a quasi-diagonal subset.
A $*$-repre\-sen\-ta\-tion $\pi\colon\fC\to\IB(\cH)$ is said to be \emph{essential} if
$\pi(\fC)$ does not contain non-zero compact operators.
The following theorem of Voiculescu is the most fundamental result on quasi-diagonal $\mathrm{C}^*$-alge\-bra{}s.
See Section 7 of \cite{bo} (Theorems 7.2.5 and 7.3.6) for the details.

\begin{thm}[Voiculescu \cite{voiculescuh}]\label{thm:voiculescu}
The following statements hold.
\begin{itemize}
\item
Let $\fC\subset\IB(\cH)$ be a faithful essential $*$-repre\-sen\-ta\-tion of
a quasi-diagonal $\mathrm{C}^*$-alge\-bra $\fC$.
Then, $\fC$ is a quasi-diagonal subset of $\IB(\cH)$.
\item
Quasi-diagonality is a homotopy invariant.
\end{itemize}
\end{thm}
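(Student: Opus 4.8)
The plan is to obtain each bullet from one of Voiculescu's deep theorems: the first from his non-commutative Weyl--von Neumann theorem (the absorption principle for representations of separable $\mathrm{C}^*$-algebras), the second from his intrinsic characterisation of quasi-diagonality by almost multiplicative, almost isometric unital completely positive maps into matrix algebras. In both cases I would first reduce to the separable situation, since a $\mathrm{C}^*$-algebra is quasi-diagonal as soon as each of its separable $\mathrm{C}^*$-subalgebras is, and quasi-diagonality of a concrete separable operator algebra is detected on a norm-dense sequence.

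For the first bullet, let $\fC\subset\IB(\cH)$ be the given faithful essential representation of a quasi-diagonal $\mathrm{C}^*$-algebra, and fix some faithful representation $\pi\colon\fC\to\IB(\cH_0)$ with $\pi(\fC)$ a quasi-diagonal subset. Passing to the infinite amplification $\rho=\pi^{(\infty)}$ keeps $\rho$ faithful and $\rho(\fC)$ quasi-diagonal (amplify the quasi-diagonalising projections block-diagonally, running a diagonal argument so that the blocks exhaust $\cH_0$), and it gains that $\rho$ is essential, since an infinite amplification of a nonzero operator is never compact. Now both $\rho$ and the inclusion $\iota$ are faithful essential representations of the same separable $\mathrm{C}^*$-algebra, so Voiculescu's theorem gives $\iota\oplus\rho\approx_u\rho$ and $\rho\oplus\iota\approx_u\iota$, hence $\iota\approx_u\rho$: there are unitaries $u_n$ with $\|u_n\rho(c)u_n^*-c\|\to 0$ for every $c\in\fC$. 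Conjugating the quasi-diagonalising projections of $\rho(\fC)$ by the $u_n$ and diagonalising over a norm-dense sequence in $\fC$ produces finite-rank projections $Q_r\nearrow 1$ strongly on $\cH$ with $\|[Q_r,c]\|\to 0$; here one uses the ``local'' reformulation of quasi-diagonality of a subset to cope with the fact that the implementing unitaries vary with $n$.

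For the second bullet it suffices to show that if $A$ is homotopy equivalent to a quasi-diagonal $B$, via $f\colon A\to B$, $g\colon B\to A$ with $g\circ f\simeq\id_A$, then $A$ is quasi-diagonal. Two facts are invoked: a $\mathrm{C}^*$-subalgebra of a quasi-diagonal $\mathrm{C}^*$-algebra is quasi-diagonal (restrict a faithful quasi-diagonal representation), and the cone $C_0(0,1]\otimes D$ over an arbitrary separable $\mathrm{C}^*$-algebra $D$ is quasi-diagonal. Picking a homotopy $\theta_t\colon A\to A$ with $\theta_0=\id_A$, $\theta_1=g\circ f$, consider the mapping cylinder $\fM=\{(b,\xi)\in B\oplus C([0,1],A):\xi(0)=g(b)\}$ of $g$. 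The assignment $\Lambda(a)=\bigl(f(a),\,t\mapsto\theta_{1-t}(a)\bigr)$ is a $*$-homomorphism $A\to\fM$, and it is injective because evaluating its second coordinate at $t=1$ returns $\theta_0(a)=a$; so $A\cong\Lambda(A)$ is a $\mathrm{C}^*$-subalgebra of $\fM$. On the other hand $\fM$ sits in a split short exact sequence $0\to C_0(0,1]\otimes A\to\fM\to B\to 0$, with ideal a cone and quotient $B$ quasi-diagonal. Granting that $\fM$ is then quasi-diagonal, so is its subalgebra $A$, and the theorem follows by symmetry of homotopy equivalence.

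The main obstacle is precisely the step just granted, together with the cone fact it rests on. The extension defining $\fM$ is not of the naive ``quasi-diagonal-by-quasi-diagonal'' form --- indeed $A$ itself need not be quasi-diagonal --- so one cannot appeal to any closure property; what makes it work is the interaction between the cone structure of the kernel, which carries a canonical ``direction'' along $(0,1]$, and the quasi-diagonalising data of $B$ pulled in through the splitting. Already the special case $B=0$, where $\fM$ degenerates to the cone $C_0(0,1]\otimes A$, is nontrivial: it asserts, for instance, that $C_0(0,1]\otimes\mathcal{O}_2$ is quasi-diagonal although $\mathcal{O}_2$ is not, and this requires a genuine construction of approximating finite-rank projections rather than a soft argument. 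Carrying out these constructions is the heart of Voiculescu's proof; by contrast, the first bullet is a comparatively formal consequence of the Weyl--von Neumann theorem, whose own proof --- an inductive construction of the intertwining unitaries from ``almost finite-rank'' building blocks in the Calkin algebra --- I would take as given.
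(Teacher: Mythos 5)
A preliminary remark: the paper does not prove this statement at all; it is quoted as Voiculescu's theorem with a pointer to Theorems 7.2.5 and 7.3.6 of the Brown--Ozawa book, so your attempt can only be compared with the standard proofs there. Your sketch of the first bullet is essentially that standard argument and is sound: amplify a quasi-diagonal faithful representation infinitely to make it essential while keeping quasi-diagonality, invoke Voiculescu's Weyl--von Neumann absorption theorem to get approximate unitary equivalence with the given faithful essential representation, and transfer the almost-commuting finite-rank projections through the local (finite set of elements, finite set of vectors, $\e$) reformulation of quasi-diagonality of a subset of $\IB(\cH)$; taking the absorption theorem as a black box is consistent with the level at which the paper itself cites these facts.

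The second bullet, however, is not proved, and the route you propose does not reduce the difficulty. The mapping cylinder $\fM$ of $g$ deformation retracts onto $B$ (slide along the cylinder coordinate), so $\fM$ is homotopy equivalent to $B$, and ``granting that $\fM$ is quasi-diagonal'' is granting precisely an instance of the homotopy invariance you are trying to prove. The auxiliary fact you lean on --- quasi-diagonality of the cone $C_0(0,1]\otimes D$ for arbitrary $D$ --- is, in the standard development, a corollary of homotopy invariance (the cone is contractible), not something available beforehand; and there is no extension-closure principle to fall back on: the Toeplitz extension $0\to\cK\to\mathcal{T}\to C(\mathbb{T})\to 0$ shows that an extension of a quasi-diagonal quotient by a quasi-diagonal ideal need not be quasi-diagonal, and no general ``split extension with cone ideal is quasi-diagonal'' statement is quotable --- establishing it for your $\fM$ is exactly the content of Voiculescu's theorem. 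The actual proof goes through Voiculescu's abstract characterization of quasi-diagonality by asymptotically multiplicative, asymptotically isometric completely positive contractions into matrix algebras, applied along a fine partition $0=t_0<t_1<\dots<t_N=1$ of the homotopy; you announce this characterization in your opening sentence but never use it. You do flag the missing step yourself, but as written the heart of the second bullet is absent, so the statement is not established by the proposal.
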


The following is based on Brown's idea (\cite{brown} and Proposition 7.4.5 in \cite{bo}).

\begin{thm}\label{thm:qd}
The $\mathrm{C}^*$-alge\-bra{}s $\fF\otimes_{\max}\fF$
and $\mathrm{C}^*\IF_d \otimes_{\max} \mathrm{C}^*\IF_d$
are quasi-diagonal.
\end{thm}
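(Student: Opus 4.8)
The engine of the proof is Voiculescu's theorem (Theorem~\ref{thm:voiculescu}): quasi-diagonality is a homotopy invariant. The two algebras in the statement are instances of a single construction. Write $A$ for $\fF$ or for $\mathrm{C}^*\IF_d$, and $B$ for the corresponding building block: $B=\ell_\infty^m$ in the first case, $B=\mathrm{C}^*\IZ\cong C(\mathbb{T})$ in the second. In either case $B$ is nuclear and $A$ is the $d$-fold unital full free product of $B$ over $\IC$; consequently $A$ is residually finite-dimensional (Choi's theorem when $A=\mathrm{C}^*\IF_d$, the Exel--Loring theorem on full free products when $A=\fF$), hence quasi-diagonal. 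The first step is to use nuclearity of $B$ together with the distributivity of the maximal tensor product over full free products, $(A_1*_{\IC}A_2)\otimes_{\max}C\cong(A_1\otimes_{\max}C)*_{C}(A_2\otimes_{\max}C)$ (amalgamation over $C$), to rewrite
\[
A\otimes_{\max}A\;\cong\;\underbrace{(B\otimes A)*_{A}(B\otimes A)*_{A}\cdots*_{A}(B\otimes A)}_{d},
\]
the $d$-fold full free product of the residually finite-dimensional algebra $B\otimes A$ amalgamated over the diagonally embedded copy $1_B\otimes A$.

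Now comes the step carrying the real content (Brown's idea, cf.\ \cite{brown} and \cite[Proposition~7.4.5]{bo}). Observe that \emph{if} the amalgam $A$ were a finite-dimensional algebra $F$, then the same distributivity identity would give $(B\otimes F)*_{F}\cdots*_{F}(B\otimes F)\cong F\otimes(B*_{\IC}\cdots*_{\IC}B)=F\otimes A$, a matrix amplification of the residually finite-dimensional algebra $A$, hence itself residually finite-dimensional and so quasi-diagonal. The genuine $A$ is not finite-dimensional, but it does have a separating family of finite-dimensional representations, and the plan --- following Brown --- is to deform $A\otimes_{\max}A$, through a chain of $\mathrm{C}^*$-algebra homotopies, onto a residually finite-dimensional ``telescope'' assembled from the finite-dimensional quotients of $A$: one pushes the generators of the $d$ free copies of $B$ simultaneously into finite-dimensional matrix blocks, modeling $A$ by finer and finer finite-dimensional images, while demanding the free-product and amalgamation relations only up to an error that the interpolating homotopies are arranged to absorb. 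The resulting algebra is residually finite-dimensional, hence quasi-diagonal, and homotopy equivalent to $A\otimes_{\max}A$; Theorem~\ref{thm:voiculescu} then transfers quasi-diagonality back to $A\otimes_{\max}A$.

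The principal obstacle is precisely this deformation: one must choose the finite-dimensional approximations of $A$ and the homotopies linking successive stages so that, in the limit, (i) the amalgamation and the noncommutative free-product relations are recovered exactly, and (ii) no norm collapses, i.e.\ the surviving cross norm on $A\otimes A$ is the maximal one (equivalently, the full-free-product norm). Once this is set up, the remaining ingredients --- the distributivity identity, the residual finite-dimensionality of $A$ and of $F\otimes A$, and the final appeal to Theorem~\ref{thm:voiculescu} --- are routine.
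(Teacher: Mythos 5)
Your instinct --- use Voiculescu's homotopy invariance --- is the right one, and your observation that the two algebras are instances of a single ``$d$-fold free product tensor itself'' pattern is accurate. But the proposal does not actually produce a homotopy; it replaces the proof with a description of a plan. You write ``the plan \ldots is to deform $A\otimes_{\max}A$ \ldots onto a residually finite-dimensional `telescope' \ldots'' and then immediately concede ``The principal obstacle is precisely this deformation.'' That obstacle \emph{is} the theorem. Nothing in the proposal explains how to choose the finite-dimensional models of the amalgam $A$, how to link successive stages by genuine $*$-homomorphic homotopies while preserving the amalgamated-free-product relations, or why no norm collapses; you acknowledge all three issues and resolve none. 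As it stands, the argument does not establish the statement.

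The paper's route is different and concretely avoids exactly this difficulty. Rather than work with the amalgamated decomposition over $A$, it first \emph{enlarges} the building block: $\fF=\ell_\infty^m*\cdots*\ell_\infty^m$ is embedded into $\fM=\IM_m(\IC)*\cdots*\IM_m(\IC)$, using Boca's theorem to get a conditional expectation $\fM\to\fF$ (hence faithfulness and $\fF\otimes_{\max}\fF\subset\fM\otimes_{\max}\fM$). The point of replacing $\ell_\infty^m$ by $\IM_m(\IC)$ is that inside the von Neumann algebra $\fM''$ generated by a faithful representation, all $d$ unital copies of the matrix units $\{e_{i,j}^k\}$ are \emph{unitarily equivalent} to the first copy (comparison theory for projections, separately on the finite and the properly infinite central summands), and the conjugating unitaries $U_k$ can be joined to $1$ by norm-continuous paths $U_k(t)$ because the unitary group of a von Neumann algebra is norm-connected. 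This produces an explicit homotopy $\pi_t\colon\fM\to\fM''$ from the embedding to a map $\pi_1$ landing in a single copy of $\IM_m(\IC)$, and similarly $\rho_t$ for the second tensor factor; $\pi_t\times\rho_t$ is then a homotopy from a faithful representation of $\fM\otimes_{\max}\fM$ to $\pi_1\times\rho_1$ landing in $\IM_m(\IC)\otimes\IM_m(\IC)$. Voiculescu's theorem plus the hereditary nature of quasi-diagonality finish the job. Note this uses homotopy \emph{domination} by a finite-dimensional algebra, not a search for a homotopy-equivalent RFD telescope, and the enlargement $\ell_\infty^m\rightsquigarrow\IM_m(\IC)$ is what makes the homotopy constructible: minimal projections of $\ell_\infty^m$ in two different free copies need not be comparable in $\fM''$, whereas full systems of $m\times m$ matrix units always are. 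Your framework, working directly with $A=B*\cdots*B$ and its amalgamated decomposition, never introduces this device and so has no mechanism to produce the required homotopy. The distributivity identity $(A_1*A_2)\otimes_{\max}C\cong(A_1\otimes_{\max}C)*_C(A_2\otimes_{\max}C)$ you invoke is correct, and residual finite-dimensionality of $A$ is true, but neither is actually used in the paper's proof, and neither compensates for the missing construction.
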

\begin{proof}
We consider $\fF$ as a $\mathrm{C}^*$-subalgebra of $\fM=\IM_m(\IC)*\cdots*\IM_m(\IC)$.
Since the conditional expectation $\Phi$ from $\IM_m(\IC)$ onto $\ell_\infty^m$ extends
to a u.c.p.\ map $\tilde{\Phi}$ from $\fM$ to $\fF$ which restricts to
$\Phi$ on each free product component (\cite{boca}),
the canonical embedding $\fF\hookrightarrow\fM$ is indeed faithful and $\tilde{\Phi}$
is a conditional expectation from $\fM$ onto $\fF$.
It follows that $\fF\otimes_{\max}\fF\subset\fM\otimes_{\max}\fM$.
We will prove that the latter is quasi-diagonal.

Let $\theta\colon\fM\otimes_{\max}\fM\to\IB(\cH)$ be a faithful $*$-repre\-sen\-ta\-tion
on a separable Hilbert space $\cH$.
We omit writing $\theta$ for a while and denote by
$\fM''$ the von Neumann algebra generated by $\theta(\fM\otimes\IC1)$.
We write $\{e_{i,j}\}_{i,j=1}^m$ for the matrix units in $\IM_m(\IC)$ and
$\{e_{i,j}^k\}$ for the $k$-th copy of it in $\fM$.
We note that the matrix units $\{e_{i,j}^k\}$ is unitarily equivalent
to the first copy $\{e_{i,j}\}$ inside $\fM''$.
This is a well-known fact, but we include the proof for the reader's convenience.
Let $z\in\fM''$ be the central projection such that $z\fM''$ is finite and
$(1-z)\fM''$ is properly infinite (Theorem V.1.19 in \cite{takesaki}).
Then, the projections $ze_{1,1}$ and $ze_{1,1}^k$ are equivalent
since they have the same center valued trace $z/n$
(Corollary V.2.8 in \cite{takesaki}).
The projections $(1-z)e_{1,1}$ and $(1-z)e_{1,1}^k$ are also
equivalent, since they are properly infinite and have full central
support $1-z$ (Theorem V.1.39 in \cite{takesaki}).
Therefore, for each $k$, there is a partial isometry $w_k\in\fM''$
such that $w_k^*w_k=e_{1,1}$ and $w_kw_k^*=e_{1,1}^k$.
Now, $U_k=\sum_i e_{i,1} w_k^* e_{1,i}^k$ is a unitary element in $\fM''$
such that $U_k e_{i,j}^k U_k^* = e_{i,j}$ for all $i,j$ and $k$.
Since $\fM''$ is a von Neumann algebra, there is a norm-continuous path
$U_k(t)$ of unitary elements connecting $U_k(0)=1$ to $U_k(1)=U_k$.
It follows that the $*$-homo\-mor\-phism{}s $\pi_t\colon\fM\mapsto\fM''$,
$e_{i,j}^k \mapsto U_k(t) e_{i,j}^k U_k(t)^*$, give rise to
a homotopy from $\pi_0\colon\fM\hookrightarrow\fM''$ to $\pi_1\colon \fM\to\IM_m(\IC)\subset\fM''$.
Likewise, there is a homotopy $\rho_t\colon \fM \to \theta(\IC1\otimes\fM)''$
between the embedding $\rho_0$ of $\fM$ as the second tensor component
and $\rho_1$ which ranges in $\IM_m(\IC)$.
Thus, $\pi_t\times\rho_t\colon\fM\otimes_{\max}\fM\to\IB(\cH)$ is a homotopy
between the embedding $\theta$ and $\pi_1\times\rho_1$.
Therefore, $\fM\otimes_{\max}\fM$ is embeddable into a $\mathrm{C}^*$-alge\-bra
which is homotopic to $\IM_m(\IC)\otimes\IM_m(\IC)$.
Now quasi-diagonality of $\fM\otimes_{\max}\fM$ follows from Theorem~\ref{thm:voiculescu}.
The case for $\mathrm{C}^*\IF_d$ is similar (Proposition 7.4.5 in \cite{bo}).
\end{proof}
\section{Proof of Theorem}
We start the proof of the inclusion $\bigcap_{\e>0}\cQ_\e^n\subset\cQ_c^n$.
Take $m,d,n$ and $[X_{i,j}^{k,l}] \in \bigcap_{\e>0}\cQ_\e^n$ arbitrary.
Then, for every $r\in\IN$, there are a pair of
$d$ POVMs $(A_i^k(r))_{i=1}^m$ and $(B_j^l(r))_{j=1}^m$ on $\cH_r$
and a u.c.p.\ map $\p_r\colon \IB(\cH_r)\to\IM_n(\IC)$
such that $\|[A_i^k(r),B_j^l(r)]\|\le r^{-1}$ and
$\|\p_r(A_i^k(r)\bullet B_j^l(r)) - X_{i,j}^{k,l}\|\le r^{-1}$.
We consider the $\mathrm{C}^*$-alge\-bra{}s
\begin{align*}
\fM &=\prod_{r=1}^\infty\IB(\cH_r)
 =\{ (C(r))_{r=1}^\infty : C(r)\in\IB(\cH_r),\ \sup_r\|C(r)\|<+\infty\},\\
\fK &=\bigoplus_{r=1}^\infty\IB(\cH_r)
 =\{ (C(r))_{r=1}^\infty : C(r)\in\IB(\cH_r),\ \lim_r\|C(r)\|=0\},
\end{align*}
and
$\fQ=\fM/\fK$, with the quotient map $\pi\colon\fM\to\fQ$.
Then $A_i^k=\pi((A_i^k(r))_{r=1}^\infty)$ and $B_j^l=\pi((B_j^l(r))_{r=1}^\infty)$ are commuting POVMs
in $\fQ$. Fix an ultra-limit $\Lim$ and
consider the u.c.p.\ map $\tilde{\p}\colon \fM\to\IM_n(\IC)$ defined by
$\tilde{\p}((C(r))_{r=1}^\infty)=\Lim_r \p_r(C(r)) \in\IM_n(\IC)$.
It factors through $\fQ$ and one obtains a u.c.p.\ map $\p\colon \fQ\to\IM_n(\IC)$
such that $\tilde{\p}=\p\circ\pi$.
It follows that $\p(A_i^kB_j^l)=\p(A_i^k\bullet B_j^l)=X_{i,j}^{k,l}$, and
hence $[X_{i,j}^{k,l}] \in \cQ_c^n$.

For the inclusion $\cQ_c^n\subset\bigcap_{\e>0}\cP_\e^n$,
take $m,d,n$ and $[X_{i,j}^{k,l}] \in \cQ_c^n$ arbitrary.
Then, there is a u.c.p.\ map $\p\colon \fF\otimes_{\max}\fF\to\IM_n(\IC)$
such that $\p(e_i^k f_j^l)=X_{i,j}^{k,l}$.
By Stinespring's dilation theorem, there are a $*$-repre\-sen\-ta\-tion of
$\fF\otimes_{\max}\fF$ on a separable Hilbert space $\cH$
and an isometry $V\colon\ell_2^n\to\cH$ such that $\p(C)=V^*CV$ for
$C\in \fF\otimes_{\max}\fF$.
By inflating the $*$-repre\-sen\-ta\-tion, we may assume it is faithful and essential.
Since $\fF\otimes_{\max}\fF$ is quasi-diagonal (Theorem \ref{thm:qd}),
there is an increasing sequence $(P_r)_{r=1}^\infty$
of finite-rank orthogonal projections on $\cH$ such that
$P_r\nearrow 1$ in the strong operator topology and $\|[C,P_r]\|\to0$ for $C\in\fF\otimes_{\max}\fF$.
Thus, $P_r e_i^k P_r$ and $P_r f_j^l P_r$ are close to projections (as $r\to\infty$)
and one can find projective POVMs $(E_i^k(r))_{i=1}^m$ and $(F_j^l(r))_{j=1}^m$
on $P_r\cH$ such that $\|P_r e_i^k P_r-E_i^k(r)\|\to0$ and $\|P_r f_j^l P_r - F_j^l(r)\|\to0$.
We note that $\|P_rV-V\|\to0$.
It follows that $\|[E_i^k(r),F_j^l(r)]\|\to0$ and
\[
\lim_{r\to\infty}V^* (E_i^k(r)\bullet F_j^l(r)) V
=\lim_{r\to\infty} V^*E_i^k(r)F_j^l(r)V
=V^*e_i^kf_j^lV = X_{i,j}^{k,l}.
\]
This implies $[X_{i,j}^{k,l}]\in\bigcap_{\e>0}\cP_\e^n$.
\hspace*{\fill}\qedsymbol
\section{Asymptotically commuting unitary matrices}\label{sec:grpcstar}
Kirchberg's conjecture (\cite{kirchberg}) asserts that
$\mathrm{C}^*\IF_d \otimes_{\min} \mathrm{C}^*\IF_d =\mathrm{C}^*\IF_d \otimes_{\max} \mathrm{C}^*\IF_d$
for some/all $d\geq2$.
By Choi's theorem (Theorem 7.4.1 in \cite{bo}),
$\mathrm{C}^*\IF_d$ is residually finite dimensional (RFD) and
so is $\mathrm{C}^*\IF_d \otimes_{\min} \mathrm{C}^*\IF_d$.
Since finite-dimensional representations factor through the minimal tensor product,
Kirchberg's conjecture is equivalent to the assertion that
$\mathrm{C}^*\IF_d \otimes_{\max} \mathrm{C}^*\IF_d$ is RFD.
For the following, let $u_1,\ldots u_d$ be the standard unitary generators of $\mathrm{C}^*\IF_d$.
We also write $u_i$ for the elements $u_i\otimes 1$ in
$\mathrm{C}^*\IF_d \otimes \mathrm{C}^*\IF_d$ and $v_j$ for $1\otimes u_j$.
We denote by $\cU(\cH)$ the set of unitary operators on $\cH$.
For $\alpha\in\IM_d(\IM_n(\IC))$, we consider
\begin{align*}
\|\alpha\|_{\min} &= \|\sum_{i,j}\alpha_{i,j}\otimes u_i v_j
 \|_{\IM_n(\IC)\otimes\mathrm{C}^*\IF_d \otimes_{\min} \mathrm{C}^*\IF_d}\\
&=\sup\{\|\sum_{i,j}\alpha_{i,j}\otimes U_iV_j\| : k\in\IN,\ U_i,V_j \in \cU(\ell_2^k)\mbox{ s.t.\ } [U_i,V_j]=0\}
\intertext{and}
\|\alpha\|_{\max} &= \|\sum_{i,j}\alpha_{i,j}\otimes u_i v_j
 \|_{\IM_n(\IC)\otimes\mathrm{C}^*\IF_d \otimes_{\max} \mathrm{C}^*\IF_d}\\
&=\sup\{\|\sum_{i,j}\alpha_{i,j}\otimes U_iV_j\| : U_i,V_j \in \cU(\ell_2)\mbox{ s.t.\ } [U_i,V_j]=0\}.
\end{align*}
In the above expressions, one may assume $U_1=1$ and $V_1=1$ by replacing $U_i$ and $V_j$ with $U_1^*U_i$ and $V_jV_1^*$.
It follows that $\|\alpha\|_{\min}=\|\alpha\|_{\max}$ for $d=2$.
By Pisier's linearization trick, Kirchberg's conjecture is equivalent to the assertion that
$\|\alpha\|_{\min}=\|\alpha\|_{\max}$ holds for every $d\geq 3$ (or just $d=3$) and every $\alpha\in\IM_d(\IM_n(\IC))$.
See Section 12 of \cite{pisier}, Chapter 13 in \cite{bo}, and \cite{qwep} for the proof of this fact and more information.
The proof of the following lemma is omitted because it is almost the same as that of the main theorem.

\begin{lem}\label{lem:m}
For every $\alpha\in\IM_d(\IM_n(\IC))$, one has
\[
\|\alpha\|_{\max}
= \inf_{\e>0}\sup\{\|\sum_{i,j}\alpha_{i,j}\otimes U_iV_j\|
: k\in\IN,\ U_i,V_j \in \cU(\ell_2^k)\mbox{\normalfont\ s.t.\ } \|[U_i,V_j]\|\le\e\}.
\]
\end{lem}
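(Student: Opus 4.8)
The plan is to mimic the proof of the main theorem essentially verbatim, with $\mathrm{C}^*\IF_d\otimes_{\max}\mathrm{C}^*\IF_d$ playing the role that $\fF\otimes_{\max}\fF$ played there and with unitary matrices in place of (projective) POVMs. Write $R_\e$ for the supremum on the right-hand side of the claimed identity. Since enlarging $\e$ only enlarges the set of admissible pairs $(U_i,V_j)$, the function $\e\mapsto R_\e$ is non-decreasing and bounded above by $\sum_{i,j}\|\alpha_{i,j}\|$, so $\inf_{\e>0}R_\e=\lim_{\e\to0^+}R_\e$; it therefore suffices to prove $\|\alpha\|_{\max}\le R_\e$ for every $\e>0$ and $\inf_{\e>0}R_\e\le\|\alpha\|_{\max}$.

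For $\|\alpha\|_{\max}\le R_\e$, fix $\e>0$. Since $\mathrm{C}^*\IF_d\otimes_{\max}\mathrm{C}^*\IF_d$ is quasi-diagonal by Theorem~\ref{thm:qd}, Voiculescu's theorem (Theorem~\ref{thm:voiculescu}) applied to a faithful essential $*$-representation $\theta$ of it on a separable Hilbert space $\cH$ yields an increasing sequence $(P_r)$ of finite-rank projections with $P_r\nearrow1$ strongly and $\|[C,P_r]\|\to0$ for every $C$ in the algebra. I would then replace the compressions $P_r\theta(u_i)P_r$ and $P_r\theta(v_j)P_r$, which are asymptotically unitary on $P_r\cH$, by their polar parts $U_i(r),V_j(r)\in\cU(P_r\cH)$, so that $\|U_i(r)-P_r\theta(u_i)P_r\|\to0$ and likewise for $V_j(r)$. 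Using $[\theta(u_i),\theta(v_j)]=0$ together with $\|[\theta(u_i),P_r]\|\to0$ and $\|[\theta(v_j),P_r]\|\to0$ one obtains $\|[U_i(r),V_j(r)]\|\to0$, hence $\le\e$ for all large $r$, while $P_r\nearrow1$ forces $\|\sum_{i,j}\alpha_{i,j}\otimes U_i(r)V_j(r)\|\to\|\sum_{i,j}\alpha_{i,j}\otimes\theta(u_i)\theta(v_j)\|=\|\alpha\|_{\max}$. This gives $R_\e\ge\|\alpha\|_{\max}$.

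For $\inf_{\e>0}R_\e\le\|\alpha\|_{\max}$, I would argue by contradiction: if $\inf_{\e>0}R_\e=\|\alpha\|_{\max}+\delta$ with $\delta>0$, then for each $r$ choose $k_r\in\IN$ and $U_i(r),V_j(r)\in\cU(\ell_2^{k_r})$ with $\|[U_i(r),V_j(r)]\|\le1/r$ and $\|\sum_{i,j}\alpha_{i,j}\otimes U_i(r)V_j(r)\|\ge\|\alpha\|_{\max}+\delta/2$. Form $\fM=\prod_r\IB(\ell_2^{k_r})$, $\fK=\bigoplus_r\IB(\ell_2^{k_r})$, and $\fQ=\fM/\fK$ with quotient map $\pi$, and set $U_i=\pi((U_i(r))_r)$, $V_j=\pi((V_j(r))_r)$; these are unitaries in $\fQ$ with $[U_i,V_j]=0$. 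By the universal properties of $\mathrm{C}^*\IF_d$ and of the maximal tensor product this pair induces a $*$-homomorphism $\mathrm{C}^*\IF_d\otimes_{\max}\mathrm{C}^*\IF_d\to\fQ$ sending $u_i\mapsto U_i$ and $v_j\mapsto V_j$, whence $\|\sum_{i,j}\alpha_{i,j}\otimes U_iV_j\|_{\IM_n(\IC)\otimes\fQ}\le\|\alpha\|_{\max}$. On the other hand, identifying $\IM_n(\IC)\otimes\fQ$ with $\prod_r\IB(\ell_2^n\otimes\ell_2^{k_r})\big/\bigoplus_r\IB(\ell_2^n\otimes\ell_2^{k_r})$ shows that this norm equals $\limsup_r\|\sum_{i,j}\alpha_{i,j}\otimes U_i(r)V_j(r)\|\ge\|\alpha\|_{\max}+\delta/2$, a contradiction. (As in the main proof, one may instead use an ultralimit $\Lim$ in place of the $\limsup$.)

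I expect the only genuinely delicate point to be the first inequality: it is where quasi-diagonality of $\mathrm{C}^*\IF_d\otimes_{\max}\mathrm{C}^*\IF_d$ (Theorem~\ref{thm:qd}) enters, and where one must check that the passage from the compressed unitaries to their polar parts and the ensuing commutator and norm estimates all degrade to $0$ uniformly over the finitely many pairs $(i,j)$. The second inequality is a routine ultraproduct argument, identical in structure to the opening paragraph of the proof of the Theorem.
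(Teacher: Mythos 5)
Your proof is correct and follows exactly the route the paper intends: the author omits the proof of Lemma~\ref{lem:m} on the grounds that it is ``almost the same as that of the main theorem,'' and your two halves are the expected unitary translations of the two inclusions proved there --- the quasi-diagonality/Voiculescu compression argument (with polar parts replacing the projective rounding step) for $\|\alpha\|_{\max}\le R_\e$, and the $\prod/\bigoplus$ quotient construction for the reverse inequality. The only cosmetic difference is that you phrase the ultraproduct half by contradiction while the paper's main-theorem version is direct; this changes nothing of substance.
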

We observe the following fact.
Suppose $\dim\cH<\infty$ and $U,V\in\cU(\cH)$ are such that $\|[U,V]\|\le\e$.
It is well-known that the pair $(U,V)$ need not be close to a
commuting pair of unitary matrices (\cite{voiculescuu}), but
after a dilation it is.
Indeed, this follows from amenability of $\IZ^2$.
Let $m=\lfloor 1/\sqrt{\e}\rfloor$ and $F=\{0,\ldots,m\}^2\subset\IZ^2$.
We define an isometry $W\colon\cH\to\ell_2\IZ^2\otimes\cH$ by
$W\xi=|F|^{-1/2}\sum_{x\in F}\delta_{x}\otimes \p(x)\xi$, where
$\p((p,q))=U^pV^q\in\cU(\cH)$ for $(p,q)\in F$.
Then, for the commuting unitary operators $u$ and $v$, acting on
$\ell_2\IZ^2\otimes\cH$ by shifting indices in $\IZ^2$ by $(-1,0)$ and $(0,-1)$ respectively,
one has
\begin{align*}
\|W^* u W - U\| & =\| \frac{1}{|F|}\sum_{x\in F\cap((-1,0)+F)} \p(x)^*\p(x+(1,0)) -U\|\\
&\le m\e + 1/(m+1) < 2\sqrt{\e}.
\end{align*}
Similarly, one has $\|W^* v W - V\|<2\sqrt{\e}$.
Since $\mathrm{C}^*\IZ^2$ is abelian (and RFD), one can find
a finite dimensional Hilbert space $\tilde{\cH}$ containing $\cH$ and
commuting unitary matrices $\tilde{U}$ and $\tilde{V}$ on $\tilde{\cH}$ such that
$\|\Phi_{\cH}(\tilde{U}) - U\|<2\sqrt{\e}$ and
$\|\Phi_{\cH}(\tilde{V}) - V\|<2\sqrt{\e}$,
where $\Phi_{\cH}\colon\IB(\tilde{\cH})\to\IB(\cH)$ is the compression.
We note that $\Phi_{\cH}(\tilde{U}) \approx U$ and
$\Phi_{\cH}(\tilde{V}) \approx V$ for any unitary elements imply
$\Phi_{\cH}(\tilde{U}\tilde{V})\approx UV$ (see, e.g., Theorem 18 in \cite{spc}).
Keeping these facts in mind, we formulate the Strong Kirchberg Conjecture (II).
\begin{skcu}
Let $d\geq2$. For every $\kappa>0$, there is $\e>0$ with the following property.
If $\dim\cH<+\infty$ and $U_1,\ldots,U_d,V_1\,\ldots,V_d\in\cU(\cH)$
are such that $\|[U_i,V_j]\|\le\e$, then there are a finite-dimensional
Hilbert space $\tilde{\cH}$ containing $\cH$ and
$\tilde{U}_i,\tilde{V}_j\in\cU(\tilde{\cH})$
such that $\|[\tilde{U}_i,\tilde{V}_j]\|=0$ and
$\|\Phi_{\cH}(\tilde{U}_i) - U_i\|\le\kappa$ and $\|\Phi_{\cH}(\tilde{V}_j) - V_j\|\le\kappa$.
\end{skcu}

We note that the analogous statement for $U_1,U_2,V$ is true, by the proof of the
following theorem plus the fact that $\mathrm{C}^*(\IF_2\times\IZ)$ is
RFD and has the LLP (local lifting property).
See Chapter 13 in \cite{bo} for the definition of the LLP and relevant results.
\begin{thm}\label{thm:skc}
The following conjectures are equivalent.
\begin{enumerate}
\item\label{skc1}
The Strong Kirchberg Conjecture (I) holds for some/all $(m,d)$.
\item\label{skc2}
The Strong Kirchberg Conjecture (II) holds for some/all $d$.
\item\label{skc3}
Kirchberg's conjecture holds and
$\mathrm{C}^*(\IF_d \times\IF_d)$
has the LLP for some/all $d\geq2$.
\item\label{skc4}
The algebraic tensor product $\mathrm{C}^*\IF_d \otimes \mathrm{C}^*\IF_d\otimes\IB(\ell_2)$
has unique $\mathrm{C}^*$-norm.
\end{enumerate}
\end{thm}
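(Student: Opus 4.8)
The plan is to prove the four equivalences by exhibiting a cycle of implications, exploiting the $\mathrm{C}^*$-algebraic dictionary already established in the paper, together with a ``local'' reformulation of the LLP. I will treat $\eqref{skc1}\Leftrightarrow\eqref{skc2}$ as the parallel-setting translation between the group-$\mathrm{C}^*$-algebra $\fF\cong\mathrm{C^*}\Gamma_{m,d}$ and $\mathrm{C}^*\IF_d$, $\eqref{skc2}\Rightarrow\eqref{skc3}$ and $\eqref{skc3}\Rightarrow\eqref{skc2}$ as the heart of the matter, and $\eqref{skc3}\Leftrightarrow\eqref{skc4}$ as a known manifestation of the LLP via Kirchberg's characterization.

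First I would handle $\eqref{skc1}\Leftrightarrow\eqref{skc2}$. Recall $m,d\geq2$, $(m,d)\neq(2,2)$ is exactly the condition that $\Gamma_{m,d}$ contains a free group, so one may embed $\IF_d$ into $\Gamma_{m,d}$ and, conversely, realize $\Gamma_{m,d}$ as a quotient of a free group; at the level of generators, a projective POVM $(P_i^k)_i$ is equivalent data to the unitary $U^k=\sum_i\omega^i P_i^k$ (with $\omega=e^{2\pi i/m}$) of order $m$, and $\|[P_i^k,Q_j^l]\|$ small is equivalent, up to universal constants, to $\|[U^k,V^l]\|$ small. The dilation in the abelian case (the $\IZ^2$-amenability argument given in the excerpt, applied coordinatewise) shows the two formulations have the same truth value; I would spell out that a commuting pair of unitaries of order $m$ can be replaced by a commuting pair of \emph{projective POVMs} since $\mathrm{C}^*(\IZ/m\IZ)^2$ is finite-dimensional, so no extra error is incurred. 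This step is routine but I would be careful that the ``some/all'' quantifier survives: monotonicity in $d$ (pad with trivial generators) and the free-group embedding give ``some $\Rightarrow$ all'' in each direction.

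The substantive step is $\eqref{skc2}\Leftrightarrow\eqref{skc3}$. For $\eqref{skc3}\Rightarrow\eqref{skc2}$ I would argue exactly as in the proof of the main theorem: given $\|[U_i,V_j]\|\le\e$ on a finite-dimensional $\cH$, the pair defines a u.c.p.\ map on a $\mathrm{C}^*$-algebra built from ultraproducts, and Kirchberg's conjecture plus the LLP of $\mathrm{C}^*(\IF_d\times\IF_d)$ lets one lift the resulting $*$-homomorphism (into the Calkin-type quotient $\fQ=\fM/\fK$) to an honest $*$-homomorphism into $\fM=\prod\IB(\tilde{\cH}_r)$, which, after a finite-rank compression afforded by quasi-diagonality (Theorem~\ref{thm:qd}), produces the desired exact commuting pair $(\tilde U_i,\tilde V_j)$ approximating $(U_i,V_j)$ in compression; RFD-ness of $\mathrm{C}^*\IF_d\otimes_{\max}\mathrm{C}^*\IF_d$ (the content of Kirchberg's conjecture here) is what guarantees the target can be taken finite-dimensional. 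For the converse $\eqref{skc2}\Rightarrow\eqref{skc3}$ I would run the main-theorem computation to see that SKC~(II) forces $\|\alpha\|_{\max}=\|\alpha\|_{\min}$ for all $\alpha\in\IM_d(\IM_n(\IC))$ — hence Kirchberg's conjecture by Pisier's linearization trick — and then extract the LLP: SKC~(II) says precisely that any ``almost representation'' of $\IF_d\times\IF_d$ into a finite-dimensional algebra dilates to an exact one, which is the lifting property characterizing the LLP when combined with Kirchberg's $\mathrm{C}^*(\IF_\infty)$-LLP theorem (Chapter 13 in \cite{bo}); the finite-dimensionality of the dilation is not a loss because $\mathrm{C}^*(\IF_d\times\IF_d)$ is RFD once Kirchberg holds.

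Finally, $\eqref{skc3}\Leftrightarrow\eqref{skc4}$ is Kirchberg's theorem that a $\mathrm{C}^*$-algebra $A$ has the LLP if and only if $A\otimes\IB(\ell_2)$ has a unique $\mathrm{C}^*$-norm (for $A$ with the LLP this is automatic; conversely uniqueness of the norm on $A\otimes\IB(\ell_2)$ is equivalent to $A\otimes_{\min}\mathrm{C}^*\IF_\infty=A\otimes_{\max}\mathrm{C}^*\IF_\infty$, which is the LLP); applied to $A=\mathrm{C}^*\IF_d\otimes\mathrm{C}^*\IF_d$, the unique-norm statement for $A\otimes\IB(\ell_2)$ already presupposes $A$ itself has a unique $\mathrm{C}^*$-norm, i.e.\ Kirchberg's conjecture, so $\eqref{skc4}$ packages both halves of $\eqref{skc3}$ at once. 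I would cite this as a standard consequence rather than reprove it. The main obstacle I anticipate is the bookkeeping in $\eqref{skc2}\Rightarrow\eqref{skc3}$: making the passage from ``dilates to a commuting pair in finite dimensions'' to ``has the LLP'' fully rigorous requires the right form of Kirchberg's lifting characterization and some care that the approximation errors (which involve the $2\sqrt{\e}$-type estimates and the compression maps $\Phi_\cH$) assemble into a genuine completely positive lifting along $\prod\IB(\cH_r)\to\prod\IB(\cH_r)/\bigoplus\IB(\cH_r)$ — the ``LLP vs.\ lifting'' translation, while classical, is exactly where one can slip.
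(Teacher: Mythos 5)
Your $(2)\Leftrightarrow(3)$ and $(3)\Leftrightarrow(4)$ are essentially the paper's route: $(2)\Rightarrow(3)$ via Lemma~\ref{lem:m} (which yields $\|\alpha\|_{\min}=\|\alpha\|_{\max}$, hence Kirchberg) and Lemma~\ref{lem:llp} (which extracts the LLP of $\mathrm{C}^*(\IF_d\times\IF_d)$); $(3)\Rightarrow(2)$ via Lemma~\ref{lem:llp} to obtain an infinite-dimensional commuting dilation, then RFD-ness of $\mathrm{C}^*(\IF_d\times\IF_d)\cong\mathrm{C}^*\IF_d\otimes_{\min}\mathrm{C}^*\IF_d$ (Theorem~1.7.8 in \cite{bo}) to cut it down to finite dimensions. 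One local correction: in $(3)\Rightarrow(2)$ the LLP produces a u.c.p.\ lift along $\fM\to\fQ$, not a $*$-homomorphism lift; you then pass through Stinespring to get genuine commuting unitaries. Also, quasi-diagonality (Theorem~\ref{thm:qd}) plays no role in this step; it is RFD-ness that matters.

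The genuine departure is your direct $(1)\Leftrightarrow(2)$. The paper deliberately avoids this and instead proves $(1)\Leftrightarrow(3)$ by an argument ``very similar'' to $(2)\Leftrightarrow(3)$, using Lemma~\ref{lem:acppovm} and a projective analogue of Lemma~\ref{lem:llp}, and uses the independence of $d$ and of the group ($\IF_d$ vs.\ $\Gamma_{m,d}$) at the level of the LLP statement $(3)$, where it is classical. Your proposed direct translation has a real gap: to pass from SKC~(II) to SKC~(I) you form $U^k=\sum_i\omega^iP_i^k$ and apply SKC~(II) to get commuting dilations $\tilde U^k$, $\tilde V^l$ — but SKC~(II) gives no control on the spectrum of $\tilde U^k$, which need not be (nor be close to) $m$-th roots of unity, so there is no canonical way to read off commuting projective POVMs with $m$ outputs from $\tilde U^k,\tilde V^l$. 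Rounding the joint spectrum to $(\IZ/m\IZ)^2$ would require an a priori bound keeping the spectrum near $m$-th roots of unity, which SKC~(II) does not supply. The ``$\IZ^2$-amenability'' dilation you invoke from the paper handles a single almost-commuting pair $U,V$ (the $d=1$ case) and does not extend ``coordinatewise'' to $d\geq2$ — if it did, SKC~(II) would be a theorem. Likewise your ``some $\Rightarrow$ all'' via free-group embeddings is not immediate: the generators of an embedded $\IF_{d'}$ are long words in the generators of $\IF_d$, and $\|[U_i,V_j]\|\leq\e$ on generators does not directly control commutators of words with the quantitative uniformity SKC demands. Routing both $(1)$ and $(2)$ through $(3)$, as the paper does, is precisely what sidesteps these issues.
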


We note that it is not known whether $\mathrm{C}^*(\IF_d \times\IF_d)$ has the LLP,
but it is independent of $d\geq2$ and equivalent to that
the LLP is closed under the maximal tensor product.
Also it is equivalent to the LLP for $\mathrm{C^*}(\Gamma_{m,d}\times\Gamma_{m,d})$.
This problem seems to be independent of Kirchberg's conjecture.
We will only prove the equivalence $(\ref{skc2})\Leftrightarrow(\ref{skc3})$,
because the proof of $(\ref{skc1})\Leftrightarrow(\ref{skc3})$ is very similar
and $(\ref{skc3})\Leftrightarrow(\ref{skc4})$ is an immediate consequence of
the tensor product characterization of the LLP (see \cite{kirchberg} and Chapter 13 in \cite{bo}).

\begin{lem}\label{lem:llp}
The following conjectures are equivalent:
\begin{enumerate}
\item\label{asl1}
For every $\kappa>0$, there is $\e>0$ with the following property.
If $\dim\cH<+\infty$ and $U_1,\ldots,U_d,V_1\,\ldots,V_d\in\cU(\cH)$
are such that $\|[U_i,V_j]\|\le\e$, then there are a (not necessarily finite-dimensional)
Hilbert space $\tilde{\cH}$ containing $\cH$ and
$\tilde{U}_i,\tilde{V}_j\in\cU(\tilde{\cH})$
such that $\|[\tilde{U}_i,\tilde{V}_j]\|=0$ and
$\|\Phi_{\cH}(\tilde{U}_i) - U_i\|\le\kappa$ and $\|\Phi_{\cH}(\tilde{V}_j) - V_j\|\le\kappa$.
\item\label{asl2}
The $\mathrm{C}^*$-alge\-bra $\mathrm{C}^*(\IF_d \times \IF_d)$ has the LLP.
\end{enumerate}
\end{lem}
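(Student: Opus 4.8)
The plan is to prove Lemma \ref{lem:llp} by relating the approximate-commutation condition in (\ref{asl1}) to the lifting problem that characterizes the LLP of $\mathrm{C}^*(\IF_d\times\IF_d)=\mathrm{C}^*\IF_d\otimes_{\max}\mathrm{C}^*\IF_d$. Recall (Chapter 13 in \cite{bo}) that a separable unital \ca $A$ has the LLP if and only if every unital completely positive map from $A$ into a quotient $\IB(\cH)/\fK$ (with $\fK$ the compacts, or more generally into any quotient $\fM/\fK$ with $\fM$ a von Neumann algebra) lifts to a unital completely positive map into $\IB(\cH)$. For $A=\mathrm{C}^*\IF_d\otimes_{\max}\mathrm{C}^*\IF_d$, which is generated by two commuting $d$-tuples of unitaries $u_i,v_j$, giving a u.c.p.\ map (in fact, here it suffices to track the unital $*$-homomorphisms, since $\mathrm{C}^*\IF_d$ is the universal \ca on $d$ unitaries) amounts to choosing commuting unitaries in the quotient, and lifting it means lifting those unitaries to honest unitaries upstairs that still commute \emph{on the nose}. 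The quotient I want is exactly $\fQ=\fM/\fK$ with $\fM=\prod_r\IB(\cH_r)$ and $\fK=\bigoplus_r\IB(\cH_r)$ as in the proof of the Theorem: a sequence of pairs of unitary $d$-tuples on finite-dimensional $\cH_r$ with $\|[U_i(r),V_j(r)]\|\to0$ is precisely a pair of commuting unitary $d$-tuples in $\fQ$, and conversely every such pair in $\fQ$ arises this way (lift each unitary coordinatewise, polar-correct to unitaries, and the commutator estimate passes to the limit).

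First I would prove (\ref{asl2})$\Rightarrow$(\ref{asl1}). Suppose $\mathrm{C}^*(\IF_d\times\IF_d)$ has the LLP, and suppose for contradiction that (\ref{asl1}) fails for some $\kappa>0$: there are finite-dimensional $\cH_r$ and unitaries $U_i(r),V_j(r)\in\cU(\cH_r)$ with $\|[U_i(r),V_j(r)]\|\le 1/r$, yet no dilation within $\kappa$ exists. Form $\fM,\fK,\fQ$ from the $\cH_r$ and let $\pi\colon\fM\to\fQ$ be the quotient map; then $U_i=\pi((U_i(r))_r)$ and $V_j=\pi((V_j(r))_r)$ are genuinely commuting unitaries in $\fQ$, so they determine a unital $*$-homomorphism $\sigma\colon\mathrm{C}^*(\IF_d\times\IF_d)\to\fQ$. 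Embedding $\fM\hookrightarrow\IB(\cH)$ for $\cH=\bigoplus_r\cH_r$, we have $\fK\subset\cK(\cH)$ closed and $\fM/\fK\hookrightarrow\IB(\cH)/\cK(\cH)$, so by the LLP (using the standard reformulation that lifts against $\IB(\cH)/\cK(\cH)$ exist, and that on the generating unitaries a u.c.p.\ lift can be polar-decomposed back to a unitary lift up to a controlled error — this is where one invokes that $\mathrm{C}^*\IF_d$ is projective-like for unitaries, cf.\ the remarks around Theorem 18 in \cite{spc}) one obtains a u.c.p., hence after correction an essentially-multiplicative, lift $\tilde\sigma\colon\mathrm{C}^*(\IF_d\times\IF_d)\to\fM$. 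Reading off the coordinates, $\tilde\sigma(u_i v_j)=((\tilde U_i(r)\tilde V_j(r))_r)$ modulo $\fK$ with $\|[\tilde U_i(r),\tilde V_j(r)]\|\to0$ and $\|\tilde U_i(r)-U_i(r)\|\to0$, $\|\tilde V_j(r)-V_j(r)\|\to0$; but the $\tilde U_i(r),\tilde V_j(r)$ are finite-dimensional unitaries that do not \emph{exactly} commute. To upgrade to exact commutation I would pass to the inductive limit / amplification trick already used in the main text: since $[\tilde U_i(r),\tilde V_j(r)]$ is small, the $C^*$-algebra they generate is close to a representation of $\mathrm{C}^*(\IF_d\times\IF_d)$ and — by the very definition of the max norm as a supremum over honestly commuting unitary pairs on $\ell_2$ — one finds commuting $\tilde U_i,\tilde V_j$ on a possibly infinite-dimensional $\tilde\cH\supset\cH_r$ with $\|\Phi_{\cH_r}(\tilde U_i)-\tilde U_i(r)\|$ small, hence within $\kappa$ of $U_i(r)$ for large $r$, contradicting the assumed failure.

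Next I would prove (\ref{asl1})$\Rightarrow$(\ref{asl2}), which I expect to be the more delicate direction and the main obstacle. Here I use the tensor-product characterization of the LLP (Kirchberg, \cite{kirchberg}, Chapter 13 in \cite{bo}): $\mathrm{C}^*(\IF_d\times\IF_d)$ has the LLP iff every u.c.p.\ map from it into any quotient \ca lifts, and since the hard instances of lifting are exactly against quotients of the form $\fM/\fK$ with $\fM=\prod_r\IB(\cH_r)$ (one reduces to separably-acting $\fM$, and then further to a von Neumann algebra which, by a standard argument, may be taken of this product form after embedding), it suffices to lift a given unital $*$-homomorphism $\sigma\colon\mathrm{C}^*(\IF_d\times\IF_d)\to\fM/\fK$. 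Such a $\sigma$ is a pair of commuting unitary $d$-tuples in $\fM/\fK$; lift each coordinate arbitrarily and polar-correct to unitaries $U_i(r),V_j(r)\in\cU(\cH_r)$, so that $\|[U_i(r),V_j(r)]\|\to0$. Now feed these into (\ref{asl1}) with, say, $\kappa=\kappa_s\to0$: for each tail we obtain Hilbert spaces $\tilde\cH_r\supset\cH_r$ and commuting unitaries $\tilde U_i(r),\tilde V_j(r)\in\cU(\tilde\cH_r)$ with $\|\Phi_{\cH_r}(\tilde U_i(r))-U_i(r)\|\le\kappa_r$ and likewise for $V$. Then the map $u_iv_j\mapsto(\Phi_{\cH_r}(\tilde U_i(r)\tilde V_j(r)))_r$ — using that $\Phi$ is u.c.p.\ and that closeness on unitaries forces closeness on products (Theorem 18 in \cite{spc}) — defines a u.c.p.\ map $\mathrm{C}^*(\IF_d\times\IF_d)\to\fM$ whose composition with the quotient map agrees with $\sigma$ up to an error $\to0$, i.e.\ it is an honest lift. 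The two technical points to be careful about: (i) reducing the LLP-lifting problem to quotients of the specific product form, which is the content of Kirchberg's characterization and for which I can cite \cite{bo}; and (ii) the passage between unitary lifts and u.c.p.\ lifts in both directions, controlled by elementary polar-decomposition estimates plus the multiplicativity-on-products lemma of \cite{spc}. Granting these, the two implications close the loop.
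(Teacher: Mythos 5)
Your direction $(\ref{asl2})\Rightarrow(\ref{asl1})$ is in the right spirit (contradiction argument, form $\fM/\fK=\prod_r\IB(\cH_r)/\bigoplus_r\IB(\cH_r)$, use the LLP to lift the resulting $*$-homomorphism $\sigma\colon\mathrm{C}^*(\IF_d\times\IF_d)\to\fM/\fK$ to a u.c.p.\ map $\varphi\colon\mathrm{C}^*(\IF_d\times\IF_d)\to\fM$), but you then take an unnecessary and underjustified detour: you try to polar-correct $\varphi(u_i),\varphi(v_j)$ into approximately commuting unitaries in $\fM$, and then ``upgrade to exact commutation via an inductive limit trick.'' The clean step, which the paper takes, is to read off a single coordinate $\varphi_N\colon\mathrm{C}^*(\IF_d\times\IF_d)\to\IB(\cH_N)$ of the lift (chosen so that $\|\varphi_N(u_i)-U_i(N)\|<\kappa$, etc.), apply Stinespring's dilation theorem to $\varphi_N$ to obtain a $*$-representation $\sigma$ on $\tilde\cH\supset\cH_N$, and set $\tilde U_i=\sigma(u_i)$, $\tilde V_j=\sigma(v_j)$: these commute on the nose, and $\Phi_{\cH_N}(\tilde U_i)=\varphi_N(u_i)$ is within $\kappa$ of $U_i(N)$. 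There is no need to first have unitaries in $\fM$ nor to force exact commutation there; the dilation does both in one stroke. As written, your upgrade step is a gap.

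The direction $(\ref{asl1})\Rightarrow(\ref{asl2})$ contains the substantive gaps. First, your claim that to establish the LLP one may reduce to lifting $*$-homomorphisms into quotients of the particular form $\prod_r\IB(\cH_r)\big/\bigoplus_r\IB(\cH_r)$ is not a standard reduction, and I do not see how to justify it; the LLP is about local liftability of u.c.p.\ maps into arbitrary quotients $B/J$. Second, even granting that reduction, specifying a map on $1,u_i,v_j,u_iv_j$ does not ``define a u.c.p.\ map $\mathrm{C}^*(\IF_d\times\IF_d)\to\fM$''; you would have to take the coordinate-wise compressions of the $*$-representations determined by the commuting $\tilde U_i(r),\tilde V_j(r)$, and then the resulting u.c.p.\ map agrees with $\sigma$ only on the generators modulo $\fK$, not on all of $\mathrm{C}^*(\IF_d\times\IF_d)$. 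This gives a local lift on the span of the generators but not the LLP without further work. The paper's argument supplies exactly the missing ingredients: it reduces (via the lifting property of $\mathrm{C}^*(\IF_{2d})$ and the Effros--Haagerup theorem) to showing the canonical surjection $\Theta\colon\IB(\ell_2)\otimes_{\min}\mathrm{C}^*(\IF_{2d})\big/\IB(\ell_2)\otimes_{\min}\ker\pi\to\IB(\ell_2)\otimes_{\min}\mathrm{C}^*(\IF_d\times\IF_d)$ is isometric, then invokes Pisier's linearization trick to restrict to the operator subspace $E=\lh\{u_i,v_j\}$, then uses a quasi-central approximate unit to produce contractions $w_i(n),w_j'(n)$ with small commutators, RFD of $\mathrm{C}^*(\IF_{2d})$ to pass to finite-dimensional representations, and explicit $4\times 4$ block-unitary dilations $U_x,V_y$ of contractions so as to apply (\ref{asl1}). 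None of this machinery appears in your proposal, and without it the implication does not go through. In short, the hard direction is essentially missing.
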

\begin{proof}
$(\ref{asl1})\Rightarrow(\ref{asl2}):$
To prove the LLP of $\mathrm{C}^*$-alge\-bra $\mathrm{C}^*(\IF_d \times \IF_d)$,
it suffices to show that the surjective $*$-homo\-mor\-phism $\pi$ from
$\mathrm{C}^*(\IF_{2d})=\mathrm{C}^*(w_1,\ldots,w_d,w'_1,\ldots,w'_d)$ onto
$\mathrm{C}^*(\IF_d \times \IF_d)$, $w_i\mapsto u_i$ and $w'_j\mapsto v_j$,
is locally liftable.
By the Effros--Haagerup theorem (Theorem C.4 in \cite{bo}),
this follows once it is shown that the canonical surjection
\[
\Theta\colon \IB(\ell_2)\otimes_{\min}\mathrm{C}^*(\IF_{2d})/\IB(\ell_2)\otimes_{\min}\ker\pi
\to \IB(\ell_2)\otimes_{\min}\mathrm{C}^*(\IF_d \times \IF_d)
\]
is isometric.
Let $u_0=1=v_0$ and $E=\lh\{u_i,v_j : 0\le i,j\le d\}$
be the operator subspace of $\mathrm{C}^*(\IF_d \times \IF_d)$.
By Pisier's linearization trick, it is enough to check that
$\Theta$ is (completely) isometric on $\IB(\ell_2) \otimes E$.
For this, take $\alpha\in\IM_{d+1}(\IB(\ell_2))$ arbitrary and
let
\[
\lambda=\| \sum \alpha_{i,j}\otimes u_iv_j\|_{
\IB(\ell_2)\otimes_{\min}\mathrm{C}^*(\IF_{2d})/\IB(\ell_2)\otimes_{\min}\ker\pi}.
\]
Let $(e_n)_{n=1}^\infty$ be a quasi-central approximate unit for $\ker\pi$ in $\mathrm{C}^*(\IF_{2d})$,
and let $w_i(n)=(1-e_n)^{1/2}w_i(1-e_n)^{1/2}+e_n$ and $w'_j(n)$ likewise
(although the proof will equally work for $w'_j(n)=w'_j$).
Then, one has
\[
\| \sum \alpha_{i,j}\otimes w_i(n)w'_j(n)\|_{\IB(\ell_2)\otimes_{\min}\mathrm{C}^*(\IF_{2d})}
\geq \lambda,
\]
\[
\lim_n\|[w_i(n),w'_j(n)]\| = \lim_n\|(1-e_n)^2[w_i,w'_j]\| = \|\pi([w_i,w'_j])\|=0,
\]
and $\lim_n\|[w_i^*(n),w'_j(n)]\| =0$.
Since $\mathrm{C}^*(\IF_{2d})$ is RFD,
one can find a finite-dimensional $*$-repre\-sen\-ta\-tion $\sigma_n$ such that
\[
\| \sum \alpha_{i,j}\otimes \sigma_n(w_i(n)w'_j(n))
\|_{\IB(\ell_2)\otimes_{\min}\sigma_n(\mathrm{C}^*(\IF_{2d}))}
\geq \lambda-\frac{1}{n}.
\]
For every contractive matrices $x$ and $y$, we consider
the unitary matrices defined by
\[
U_x=\left[\begin{smallmatrix}
x & \sqrt{1-xx^*} & &\\ \sqrt{1-x^*x} & -x^* & &\\ & & x & \sqrt{1-xx^*}\\ & & \sqrt{1-x^*x} & -x^*
\end{smallmatrix}\right]
\ \mbox{ and }\
V_y=\left[\begin{smallmatrix}
y & & \sqrt{1-yy^*} &\\ & y & & \sqrt{1-yy^*}\\ \sqrt{1-y^*y} & & -y^* &\\ & \sqrt{1-y^*y} & & -y^*
\end{smallmatrix}\right].
\]
We observe that the $(1,1)$-entry of $U_xV_y$ is $xy$, and
if $\|[x,y]\|\approx0$ and $\|[x^*,y]\|\approx0$, then
$\|[U_x,V_y]\|\approx0$. Thus, applying the assumption (\ref{asl1}) to
$U_{\sigma_n(w_i(n))}$ and $V_{\sigma_n(w'_j(n))}$,
one may find unitary operators
$\tilde{U}_i(n)$, $\tilde{V}_j(n)$ and the compression $\Phi_n$
such that $[\tilde{U}_i(n),\tilde{V}_j(n)]=0$,
$\|\Phi_n(\tilde{U}_i(n))-U_{\sigma_n(w_i(n))}\|\to0$, and
$\|\Phi_n(\tilde{V}_j(n))-V_{\sigma_n(w'_j(n))}\|\to0$.
It follows that
\begin{align*}
\| \sum \alpha_{i,j}\otimes u_iv_j\|_{\IB(\ell_2)\otimes_{\min}\mathrm{C}^*(\IF_d \times \IF_d)}
&\geq \limsup_{n\to\infty} \| \sum \alpha_{i,j}\otimes \tilde{U}_i(n)\tilde{V}_j(n)\|\\
&\geq \limsup_{n\to\infty} \| \sum \alpha_{i,j}\otimes \Phi_n(\tilde{U}_i(n)\tilde{V}_j(n))\|\\
&= \limsup_{n\to\infty} \| \sum \alpha_{i,j}\otimes U_{\sigma_n(w_i(n))}V_{\sigma_n(w'_j(n))}\|\\
&\geq \limsup_{n\to\infty} \| \sum \alpha_{i,j}\otimes \sigma_n(w_i(n)w'_j(n))\|\\
&\geq\lambda.
\end{align*}
This proves that $\Theta$ is isometric on $\IB(\ell_2) \otimes E$,
and the assertion (\ref{asl2}) follows.

$(\ref{asl2})\Rightarrow(\ref{asl1}):$
Suppose that the assertion (\ref{asl1}) does not hold for some $\kappa>0$.
Thus, there are unitary operators $U_i(n)$ and $V_j(n)$ on $\cH_n$
with $\|[U_i(n),V_j(n)]\|\to0$ which witness a violation of the conclusion of (\ref{asl1}).
We consider the $\mathrm{C}^*$-alge\-bra{}s $\fM=\prod \IB(\cH_n)$ and $\fQ=\prod \IB(\cH_n)/\bigoplus\IB(\cH_n)$,
with the quotient map $\pi\colon\fM\to\fQ$.
Then, $U_i=\pi((U_i(n))_{n=1}^\infty)$ and $V_j=\pi((V_j(n))_{n=1}^\infty)$ are
commuting systems of unitary elements in $\fQ$, and the map $u_i\mapsto U_i$, $v_j\mapsto V_j$
extends to a $*$-homo\-mor\-phism on $\mathrm{C}^*(\IF_d \times \IF_d)$.
By the assumption (\ref{asl2}), one may find a u.c.p.\ map
$\p\colon\mathrm{C}^*(\IF_d \times \IF_d)\to\fM$ such that
$\pi(\p(u_i))=U_i$ and $\pi(\p(v_j))=V_j$.
We expand $\p$ as $(\p_n)_{n=1}^\infty$ and see
$\|U_i(n) - \p_n(u_i)\|\to0$ and $\|V_j(n) - \p_n(v_j)\|\to0$.
Take $N$ such that
$\|U_i(N) - \p_N(u_i)\|<\kappa$ and $\|V_j(N) - \p_N(v_j)\|<\kappa$.
By Stinespring's dilation theorem, there are a $*$-repre\-sen\-ta\-tion
$\sigma\colon\mathrm{C}^*(\IF_d \times \IF_d)\to\IB(\tilde{\cH})$
and an isometry $W\colon \cH_N\to\tilde{\cH}$
such that $\p_N(x) = W^*\sigma(x)W$.
Thus identifying $\cH_N$ with $W\cH_N$, one obtains unitary operators
$\tilde{U}_i=\sigma(u_i)$ and $\tilde{V}_j=\sigma(v_j)$ which
satisfy the conclusion of the assertion (\ref{asl1}) for $U_i(N)$ and $V_j(N)$.
This is a contradiction to the hypothesis.
\end{proof}

The analogue of Lemma~\ref{lem:llp} also holds in the projective setting,
and it can be proven using the following dilation lemma.
\begin{lem}\label{lem:acppovm}
Let $m\in\IN$ be fixed and $(A_i(n))_{i=1}^m$ and $(B_j(n))_{j=1}^m$
be sequences of POVMs on $\cH_n$ such that $\lim_n\|[A_i(n),B_j(n)]\|=0$.
Then, there are sequences of projective POVMs $(P_i(n))_{i=1}^m$ and $(Q_j(n))_{j=1}^m$
on $\ell_2^{m+1}\otimes\ell_2^{m+1}\otimes\cH_n$ such that
$\lim_n\|[P_i(n),Q_j(n)]\|=0$ and $\Phi_n(P_i(n))=A_i(n)$, $\Phi_n(Q_j(n))=B_j(n)$,
and $\Phi_n(P_i(n)Q_j(n))=A_i(n)B_j(n)$.
Here $\Phi_n$ denotes the compression to $\IC\delta_1\otimes\IC\delta_1\otimes\cH_n\cong\cH_n$.
\end{lem}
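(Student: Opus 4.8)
The plan is to reduce Lemma~\ref{lem:acppovm} to a single \emph{universal} dilation statement for one POVM, and then to push the asymptotic commutation through that dilation by an elementary functional-calculus estimate. The dilation statement I would first establish is: for every POVM $(A_i)_{i=1}^m$ on a Hilbert space $\cH$ there is a projective POVM $(P_i)_{i=1}^m$ on $\ell_2^{m+1}\otimes\cH$ such that, writing operators on $\ell_2^{m+1}\otimes\cH$ as $(m+1)\times(m+1)$ matrices over $\IB(\cH)$, the $(1,1)$-entry of each $P_i$ equals $A_i$, and every entry $(P_i)_{a,b}$ is a fixed noncommutative $*$-polynomial in $A_1^{1/2},\dots,A_m^{1/2}$ (depending only on $m,i,a,b$, not on $\cH$ or $(A_i)$). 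Granting this, I apply it to $(A_i(n))$ and to $(B_j(n))$ to get projective POVMs $(P_i)$ and $(Q_j)$ on $\ell_2^{m+1}\otimes\cH_n$, and I let $P_i(n),Q_j(n)$ act on $\ell_2^{m+1}\otimes\ell_2^{m+1}\otimes\cH_n$ by having $P_i$ act on the first $\ell_2^{m+1}$-leg and on $\cH_n$ (identity on the second leg) and $Q_j$ act on the second $\ell_2^{m+1}$-leg and on $\cH_n$ (identity on the first leg). These are clearly projective POVMs, and since the compression $\Phi_n$ onto $\IC\delta_1\otimes\IC\delta_1\otimes\cH_n$ just reads off the $(1,1)$-entry on each $\ell_2^{m+1}$-leg, the identities $\Phi_n(P_i(n))=A_i(n)$, $\Phi_n(Q_j(n))=B_j(n)$ and $\Phi_n(P_i(n)Q_j(n))=A_i(n)B_j(n)$ follow at once from the block structure. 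Moreover $[P_i(n),Q_j(n)]$, viewed as a matrix over the two $\ell_2^{m+1}$-legs, has $\bigl((a,c),(b,d)\bigr)$-entry $[(P_i)_{a,b},(Q_j)_{c,d}]$, so $\|[P_i(n),Q_j(n)]\|\le C_m\max_{a,b,c,d}\|[(P_i)_{a,b},(Q_j)_{c,d}]\|$ for a constant $C_m$ depending only on $m$.

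Thus it remains only to show $\|[(P_i)_{a,b},(Q_j)_{c,d}]\|\to0$, and for this I would use the standard fact: if $X_1,\dots,X_r,Y_1,\dots,Y_s$ are self-adjoint contractions with $\|[X_a,Y_b]\|\le\delta$ for all $a,b$, then for fixed continuous $f,g\colon[-1,1]\to\IC$ and fixed noncommutative $*$-polynomials $p,q$ one has $\|[p(f(X_1),\dots,f(X_r)),q(g(Y_1),\dots,g(Y_s))]\|\le\omega(\delta)$ with $\omega(\delta)\to0$ as $\delta\to0$. The argument is a two-stage approximation: approximating $f$ uniformly on $[-1,1]$ by a polynomial $\tilde f$ gives $\|[f(X_a),Y_b]\|\le2\|f-\tilde f\|_\infty+\|[\tilde f(X_a),Y_b]\|$, and a Leibniz expansion bounds the last term by a constant times $\delta$; the same argument in the second variable makes $\|[f(X_a),g(Y_b)]\|$ small; and a final Leibniz expansion writes $[p(\dots),q(\dots)]$ as a finite sum of terms each carrying one factor of the form $[f(X_a),g(Y_b)]$. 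Applied with $f=g$ equal to the function $t\mapsto\sqrt t$ on $[0,1]$ and with $\|[A_k(n),B_l(n)]\|\to0$, this yields the required estimate. (The Stone--Weierstrass step is what lets one avoid worrying that $t\mapsto\sqrt t$ fails to be Lipschitz at $0$.)

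It remains to construct the dilation. I identify $\ell_2^{m+1}\otimes\cH$ with $\cH\oplus(\ell_2^m\otimes\cH)$, the first summand being $\IC\delta_1\otimes\cH$; I let $V\colon\cH\to\ell_2^m\otimes\cH$ be the Naimark isometry $V\xi=\sum_{i=1}^m\delta_i\otimes A_i^{1/2}\xi$ (so that $V^*V=\sum_iA_i=1$); and I set
\[
U=\left[\begin{smallmatrix} 0 & V^*\\ V & 1-VV^* \end{smallmatrix}\right].
\]
Using $V^*V=1$ one checks directly that $U=U^*$ and $U^2=1$; geometrically $U$ is the reflection interchanging $\IC\delta_1\otimes\cH$ with the range of $V$ and fixing their common orthogonal complement. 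Letting $D_i$ be the coordinate projection of $\ell_2^{m+1}\otimes\cH$ onto $\IC\delta_{i+1}\otimes\cH$ ($1\le i\le m$), so that $\sum_iD_i$ equals $1$ minus the $\delta_1$-coordinate projection, I put $P_i:=UD_iU$. These are mutually orthogonal projections with $\sum_iP_i=1-E$, where $E:=U(\delta_1\text{-coordinate projection})U$ is the projection onto the range of $V$; note $E$ is orthogonal to every $P_i$ and its $(1,1)$-entry is $0$. So I redefine $P_1:=UD_1U+E$, which keeps each $P_i$ a projection and gives $\sum_iP_i=1$. The $(1,1)$-entry of $UD_iU$ is $V^*D_iV=V^*(e_{ii}\otimes1)V=A_i^{1/2}A_i^{1/2}=A_i$ (this is exactly Naimark's identity), and adding $E$ does not change it, so $(P_i)_{1,1}=A_i$. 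Finally every entry of $U$ is one of $0$, $1$, $A_k^{1/2}$, $\delta_{kl}1-A_k^{1/2}A_l^{1/2}$, and every entry of $E$ is $A_k^{1/2}A_l^{1/2}$, so every entry of $P_i$ is a fixed $*$-polynomial in $A_1^{1/2},\dots,A_m^{1/2}$, as needed.

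The substantive point, and the step I expect to be the main obstacle, is the construction of the dilation in the previous paragraph: one needs a Naimark-type dilation whose ``compression to $\cH$'' is compression onto an honest coordinate subspace and whose matrix entries are algebraically explicit --- the latter being precisely what makes the functional-calculus estimate applicable. The plain Naimark dilation has explicit entries but compresses onto a non-coordinate subspace, whereas conjugating by an arbitrary unitary to correct this destroys the explicitness. The resolution is to conjugate by the particular reflection $U$ above, which stays explicit because it interchanges two \emph{orthogonal} subspaces --- at the cost of one extra coordinate and the minor bookkeeping of absorbing the left-over projection $E$ into $P_1$. Once the dilation is in hand, the compression identities and the passage from $\|[A_k(n),B_l(n)]\|\to0$ to $\|[P_i(n),Q_j(n)]\|\to0$ are routine.
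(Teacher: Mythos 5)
Your proposal is correct, and it follows the paper's overall strategy --- dilate each POVM to a projective POVM on $\ell_2^{m+1}\otimes\cH_n$ so that the $(1,1)$-block is the original operator and so that the other matrix entries are explicit functional-calculus expressions in $A_1^{1/2},\dots,A_m^{1/2}$, then place the two dilated families on separate $\ell_2^{m+1}$-legs via $\sigma_{1,3}$ and $\sigma_{2,3}$, read off the compression identities entrywise, and deduce asymptotic commutation from the entrywise estimate --- but it uses a genuinely different dilation. The paper takes the row contraction $X(n)=[A_1(n)^{1/2}\ \cdots\ A_m(n)^{1/2}]$ and forms the Halmos-type unitary
\[
U(n)=\begin{bmatrix} X(n) & 0 \\ \sqrt{1-X(n)^*X(n)} & -X(n)^* \end{bmatrix}\in\IM_{m+1}(\IB(\cH_n)),
\]
sets $P_i'(n)=U(n)E_i(n)U(n)^*$ for $i<m$ and absorbs the extra coordinate into $P_m'(n)=U(n)(E_m(n)+E_{m+1}(n))U(n)^*$. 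Your route instead conjugates coordinate projections by the self-adjoint reflection
\[
U=\begin{bmatrix} 0 & V^* \\ V & 1-VV^* \end{bmatrix},
\]
built from the Naimark isometry $V=X^*$, and absorbs the leftover range projection $E=\operatorname{diag}(0,VV^*)$ into $P_1$. The two constructions are essentially mirror-image solutions to the same bookkeeping problem (one redundant $(m+1)$-st coordinate that must be merged into one projection without disturbing the $(1,1)$-entry). What your version buys: the entries of $P_i$ are honest noncommutative $*$-polynomials in $A_1^{1/2},\dots,A_m^{1/2}$, so the only functional calculus needed is the scalar square root applied to the single positive contractions $A_k(n)$, $B_l(n)$, and the Stone--Weierstrass/Leibniz estimate you spell out is exactly the right tool. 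The paper's $\sqrt{1-X(n)^*X(n)}$ is the square root of an $m\times m$ matrix over $\IB(\cH_n)$, so the same kind of estimate must be run in the matrix-valued functional calculus; the paper leaves that step implicit (``the entries of $P_i'(n)$ asymptotically commute with those of $Q_j'(n)$''), whereas your explicit two-stage polynomial-approximation argument fills it in cleanly. In short: same skeleton, different (and in the entry-bookkeeping slightly more elementary) dilation, with the functional-calculus step made explicit.
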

\begin{proof}
Let $X(n)=[A_1(n)^{1/2}\ \cdots\ A_m(n)^{1/2}]\in\IM_{1,m}(\IB(\cH_n))$, and consider the unitary element
\[
U(n)=\begin{bmatrix} X(n) & 0 \\ \sqrt{1-X(n)^*X(n)} & -X(n)^* \end{bmatrix} \in \IM_{m+1}(\IB(\cH_n)).
\]
We denote by $E_i(n)$ the orthogonal projection in $\IM_{m+1}(\IB(\cH_n))$ onto the $i$-th coordinate,
and define $P_i'(n)=U(n)E_i(n)U(n)^*$ for $i=1,\ldots,m-1$ and $P_m'(n)=U(n)(E_m(n)+E_{m+1}(n))U(n)^*$.
Then, $(P_i'(n))_{i=1}^m$ is a projective POVM on $\ell_2^{m+1}\otimes\cH_n$
whose $(1,1)$-entry is $(A_i(n))_{i=1}^m$.
Similarly, one obtains a projective POVM $(Q_j'(n))_{j=1}^m$.
Define $\sigma_{p,3}\colon \IB(\ell_2^{m+1}\otimes\cH_n)\to \IB(\ell_2^{m+1}\otimes\ell_2^{m+1}\otimes\cH_n)$
by $C\otimes D\mapsto C\otimes 1\otimes D$ if $p=1$, and
$C\otimes D\mapsto 1\otimes C\otimes D$ if $p=2$;
and let $P_i(n)=\sigma_{1,3}(P_i'(n))$ and $Q_j(n)=\sigma_{2,3}(Q_j'(n))$.
Since $\lim_n\|[A_i(n),B_j(n)]\|=0$, the entries of
$P_i'(n)$ asymptotically commute with those of $Q_j'(n)$.
It follows that $\lim_n\|[P_i(n),Q_j(n)]\|=0$. They also satisfy the other conditions.
\end{proof}

We are now ready for the proof of Theorem \ref{thm:skc}.
\begin{proof}
$(\ref{skc2})\Rightarrow(\ref{skc3}):$
Assume the assertion (\ref{skc2}).
Then, Lemma~\ref{lem:m} implies that $\|\alpha\|_{\max}=\|\alpha\|_{\min}$
for every $\alpha\in\IM_{d+1}(\IM_n(\IC))$ and hence Kirchberg's conjecture follows.
Lemma~\ref{lem:llp} implies that $\mathrm{C}^*(\IF_d \times\IF_d)$
has the LLP.

$(\ref{skc3})\Rightarrow(\ref{skc2}):$
Assume the assertion (\ref{skc3}).
Then, by Lemma~\ref{lem:llp}, one has the Strong Kirchberg Conjecture (II)
for a possibly infinite-dimensional $\tilde{\cH}$.
Since Kirchberg's conjecture is assumed and
$\mathrm{C}^*(\IF_d \times\IF_d) \cong \mathrm{C}^*\IF_d\otimes_{\min} \mathrm{C}^*\IF_d$
is RFD, one can reduce $\tilde{\cH}$ to a finite-dimensional Hilbert space,
up to a perturbation. See Theorem 1.7.8 in \cite{bo}.
\end{proof}

\subsection*{Final Remarks and Acknowledgment}
The main theorem equally holds for three or more commuting systems.
Although it is stated as ``the Strong Kirchberg Conjecture,"
the author thinks that both Kirchberg's and the LLP conjectures
for $\mathrm{C}^*(\IF_d \times\IF_d)$ would have negative answers.
This research came out from the author's lectures
for ``Masterclass on sofic groups and applications to operator algebras"
(University of Copenhagen, 5--9 November 2012).
The author gratefully acknowledges the kind hospitality provided by
University of Copenhagen during his stay in Fall 2012.

\end{document}